\documentclass[11pt,a4paper]{amsart}

\usepackage[T1]{fontenc}
\usepackage[utf8]{inputenc}
\usepackage{lmodern}
\usepackage{microtype}
\usepackage{mathtools}
\usepackage{amssymb}
\usepackage{mathrsfs}
\usepackage[margin=2.6cm]{geometry}
\usepackage[hidelinks]{hyperref}
\DeclareMathOperator{\Ric}{Ric}
\DeclareMathOperator{\Cut}{Cut}

\newcommand{\dv}{\operatorname{div}}
\newcommand{\nablasq}{\nabla^{2}}
\newcommand{\pLap}{\Delta_{p}}
\newcommand{\bdry}{\partial M}
\newcommand{\tube}[1]{B_{#1}(\bdry)}
\newcommand{\ctube}[1]{\overline{B}_{#1}(\bdry)}
\newcommand{\Mreg}{M\setminus\bigl(\bdry\cup\Cut(\bdry)\bigr)}
\newcommand{\innerN}{N}
\newcommand{\outnu}{\nu}

\theoremstyle{plain}
\newtheorem{theorem}{Theorem}[section]
\newtheorem{lemma}[theorem]{Lemma}
\newtheorem{proposition}[theorem]{Proposition}
\newtheorem{corollary}[theorem]{Corollary}

\theoremstyle{definition}

\theoremstyle{remark}

\numberwithin{equation}{section}

\title[Yau-type estimates for p-harmonic functions]
{Yau-type gradient estimates for P-harmonic functions on Riemannian manifolds
with boundary under Dirichlet boundary condition}

\begin{document}
\author{Bingqi Liu}
\address{School of Mathematics and Statistics, Wuhan University, Wuhan 430072, P.R.China}
\email{2021302191907@whu.edu.cn}

\vspace{-16pt}
\begin{abstract}
A Yau-type gradient estimate is proved for positive $p$-harmonic functions on complete
Riemannian manifolds with compact boundary, under a Ricci lower bound on the Riemannian manifold and a mean
curvature lower bound on the boundary, assuming the Dirichlet condition and a sign condition on the outward normal derivative. The result extends the estimate for harmonic functions by Kunikawa and
Sakurai to the full range of $p$-Laplace operators and yields a Liouville theorem when
the curvature hypotheses are nonnegative. The cutoff method of the linear theory does
not extend when the exponent differs from two, since it requires a directional Hessian
of the distance to the boundary that Laplacian comparison cannot control. That cutoff
is replaced here by a radial barrier at the boundary and an intrinsic maximum principle
in the interior.
\end{abstract}

\maketitle

\maketitle

\noindent\textit{Keywords.}
$p$-harmonic functions; gradient estimates; manifolds with boundary;
Dirichlet boundary condition.

\noindent\textit{2020 Mathematics Subject Classification.} 53C20, 53C40

\vspace{0.8em}

\section{Introduction}\label{sec:intro}

Gradient estimate is a very important technique in geometric analysis and has attracted much attentions since Yau's seminal paper (\cite{Yau1975}). Yau proved the following gradient estimate for positive harmonic functions:

\begin{theorem}(\cite{Yau1975})
Let $(M,g)$ be an $n$-dimensional, complete Riemmanian manifold (without boundary). For $K\geq 0$, we assume that $Ric_M\geq -(n-1)K$. For $x_0\in M$, let $u:B_R(x_0)\to (0,\infty)$ be a positive harmonic function. Then we have
\begin{align*}
    \frac{|\nabla u|}{u}\leq C_n\left(\frac{1}{R}+\sqrt{K}\right),
\end{align*}
on $B_{\frac{R}{2}}(x_0)$, where $C_n$ is a positive constant depending only on $n$.
\end{theorem}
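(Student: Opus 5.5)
The plan is Yau's classical argument via the Bochner formula applied to $\log u$. Set $f=\log u$; harmonicity of $u$ gives $\Delta f=-|\nabla f|^2$. Put $w=|\nabla f|^2$. Bochner's formula together with $\Ric\geq -(n-1)K$ yields
\begin{align*}
\tfrac12\Delta w=|\nabla^2 f|^2+\langle\nabla f,\nabla\Delta f\rangle+\Ric(\nabla f,\nabla f)\geq |\nabla^2 f|^2-\langle\nabla f,\nabla w\rangle-(n-1)Kw .
\end{align*}
The key refinement is a lower bound on $|\nabla^2 f|^2$ that is better than $(\Delta f)^2/n$. At a point where $\nabla f\neq 0$, choose an orthonormal frame with $e_1=\nabla f/|\nabla f|$. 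Then
\begin{align*}
|\nabla^2 f|^2\geq f_{11}^2+2\sum_{j\geq2}f_{1j}^2+\tfrac{1}{n-1}\Bigl(\sum_{j\geq 2}f_{jj}\Bigr)^2 ,
\end{align*}
with $\sum_{j\geq2}f_{jj}=-w-f_{11}$ and $f_{11}=\tfrac{1}{2}\langle\nabla w,\nabla f\rangle/w$. Expanding and absorbing the $f_{11}^2$ and cross terms, one gets
\begin{align*}
|\nabla^2 f|^2\geq \tfrac{w^2}{n-1}-C_n\frac{|\nabla w|^2}{w}.
\end{align*}
Since the absorbed terms are exactly gradient terms of $w$, this in turn gives
\begin{align*}
\Delta w\geq \tfrac{2}{n-1}w^2-2\langle\nabla f,\nabla w\rangle-c_n\frac{|\nabla w|^2}{w}-2(n-1)Kw .
\end{align*}

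Next I localize with a cutoff. Let $r=d(x_0,\cdot)$ and $\phi=\psi(r/R)$, where $\psi\equiv1$ on $[0,1/2]$, $\psi=0$ on $[1,\infty)$, and $|\psi'|^2/\psi\leq C$, $\psi''\geq -C$. Laplacian comparison under $\Ric\geq-(n-1)K$ gives $\Delta r\leq (n-1)(1/r+\sqrt K)$ in the barrier sense. Hence
\begin{align*}
|\nabla\phi|^2/\phi\leq C/R^2,\qquad \Delta\phi\geq -C_n\bigl(R^{-2}+\sqrt K R^{-1}\bigr).
\end{align*}
Consider $G=\phi w$ on $\overline{B_R(x_0)}$. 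It vanishes on the boundary, so it attains a positive maximum at an interior point $x_1$, assuming $w\not\equiv0$. If $x_1$ lies in the cut locus of $x_0$, I use Calabi's trick, replacing $r$ by $d(x_0',\cdot)+d(x_0,x_0')$ for a point $x_0'$ slightly along a minimizing geodesic, so that we may assume smoothness at $x_1$.

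At $x_1$ we have $\nabla G=0$, i.e.\ $\nabla w=-w\nabla\phi/\phi$, and $\Delta G\leq0$. Multiply the differential inequality for $w$ by $\phi^2$ and substitute. The term $\phi^2\langle\nabla f,\nabla w\rangle$ becomes $-G\langle\nabla f,\nabla\phi\rangle$, bounded by
\begin{align*}
G^{3/2}\phi^{-1/2}|\nabla\phi|\leq C G^{3/2}/R .
\end{align*}
The $|\nabla w|^2/w$ term becomes $G|\nabla\phi|^2/\phi\leq CG/R^2$. This leads to
\begin{align*}
\tfrac{2}{n-1}G^2\leq C_n\Bigl(\tfrac{G^{3/2}}{R}+\tfrac{G}{R^2}+\tfrac{\sqrt K G}{R}+KG\Bigr).
\end{align*}
Young's inequality then gives $G\leq C_n(R^{-2}+K)$ at $x_1$, hence everywhere. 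On $B_{R/2}(x_0)$ we have $\phi=1$, so $|\nabla u|/u=\sqrt w\leq C_n(1/R+\sqrt K)$.

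The main obstacle is the linear term $\langle\nabla f,\nabla w\rangle$. It is of order $G^{3/2}$ and must be dominated by the quadratic $G^2$ term. This is why the sharpened Hessian estimate, with the coefficient $1/(n-1)$ rather than a weaker Cauchy--Schwarz constant, and the $G^{3/2}/R$ bound are essential. The other delicate point is justifying the maximum principle at cut points, which the Calabi argument handles.
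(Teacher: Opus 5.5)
The paper does not prove this theorem; it is quoted from Yau as background, so there is no in-paper proof to compare with. Your outline is the standard Yau/Cheng--Yau cutoff argument and its overall structure is sound. However, one displayed step is false as written, and your account of what is essential is off.

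\textbf{The Hessian bound is false.} The bound $\lvert\nabla^2 f\rvert^2\geq \frac{w^2}{n-1}-C_n\frac{\lvert\nabla w\rvert^2}{w}$ cannot hold, and neither can the $\Delta w$ inequality you derive from it. Expanding $\frac{1}{n-1}(w+f_{11})^2$ produces the cross term $\frac{2wf_{11}}{n-1}=\frac{1}{n-1}\langle\nabla f,\nabla w\rangle$. This is linear in $\nabla w$, whereas $\lvert\nabla w\rvert^2/w=4\sum_i f_{1i}^2$ is quadratic, so it cannot be absorbed.

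For a concrete failure, take $f_{11}=-t$, $f_{1j}=0$, and $f_{jj}=-(w-t)/(n-1)$ for $j\geq 2$. This has trace $-w$, and it is realized by $\log$ of a harmonic quadratic in $\mathbb{R}^n$. Then
\[
\lvert\nabla^2 f\rvert^2-\frac{w^2}{n-1}+C_n\frac{\lvert\nabla w\rvert^2}{w}=-\frac{2wt}{n-1}+\Bigl(\frac{n}{n-1}+4C_n\Bigr)t^2<0
\]
for small $t>0$.

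The correct statement keeps the cross term: $\lvert\nabla^2 f\rvert^2\geq\frac{w^2}{n-1}+\frac{1}{n-1}\langle\nabla f,\nabla w\rangle$. This gives
\[
\Delta w\geq\frac{2}{n-1}w^2-\frac{2(n-2)}{n-1}\langle\nabla f,\nabla w\rangle-2(n-1)Kw .
\]
The repair costs nothing downstream. At $x_1$ you bound $\phi^2\lvert\langle\nabla f,\nabla w\rangle\rvert\leq CG^{3/2}/R$ in absolute value, so the sign and coefficient of the drift are irrelevant, and the $\lvert\nabla w\rvert^2/w$ term simply disappears.

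\textbf{The refinement is not essential.} For the same reason, the $1/(n-1)$ constant is not needed. The plain trace inequality $\lvert\nabla^2 f\rvert^2\geq(\Delta f)^2/n=w^2/n$ already gives
\[
\frac{2}{n}G^2\leq C_n\bigl(G^{3/2}/R+G/R^2+\sqrt{K}\,G/R+KG\bigr)
\]
at $x_1$. Young's inequality then removes $G^{3/2}/R$ for any positive coefficient in front of $G^2$. The refined constant matters only for sharp constants. Compare the paper's Lemma~\ref{lem:critical}: there the refined Cauchy--Schwarz is used only at a critical point of the energy density, where $u_{11}=0$ and the cross term vanishes identically.

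\textbf{Two smaller points.}
\begin{enumerate}
\item You need $\psi'\leq 0$. It is used both to turn the upper bound on $\Delta r$ into a lower bound on $\Delta\phi$, and to make Calabi's modified cutoff lie below $\phi$ with equality at $x_1$.
\item Since $u$ is only defined on the open ball, $\phi w$ need not extend continuously by $0$ to $\partial B_R(x_0)$. Run the argument on $B_{R'}(x_0)$ with $R'<R$ and let $R'\uparrow R$.
\end{enumerate}
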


A consequence of the gradient estimate is the well-known Louville theorem, which states that any positive harmonic function on a complete Riemannian manifold with nonnegative Ricci curvature is a constant.
Cheng and Yau \cite{ChengYau1975} established the corresponding
local estimate on geodesic balls. For $p$-harmonic functions the picture is more
delicate, because the operator is degenerate or singular wherever the gradient
vanishes. Kotschwar and Ni \cite{KotschwarNi2009} obtained local estimates under a
lower bound on sectional curvature by a Cheng--Yau cutoff, which involves the Hessian
of the distance function. Wang and Zhang \cite{WangZhang2011} later proved the
analogue under a Ricci lower bound, using a Moser iteration that differentiates the
distance only once.

On manifolds with boundary, Laplacian comparison for the distance to the boundary
requires an additional lower bound on the mean curvature of the boundary
\cite{Kasue1982}. Under these hypotheses, Kunikawa and Sakurai \cite{KunikawaSakurai2022}
established the following Yau-type estimate for positive harmonic functions satisfying the
Dirichlet condition (i.e., it is constant on the boundary), together with a sign condition on the outward normal derivative:

\begin{theorem}\label{thm-KS}(\cite{KunikawaSakurai2022})
Let $(M,g)$ be an $n$-dimensional, complete Riemmanian manifold with compact boundary. For $K\geq 0$, we assume that $Ric_M\geq -(n-1)K$ and $H_{\partial M}\geq -(n-1)\sqrt{K}$. Let $u:B_R(\partial M)\to (0,\infty)$ be a positive harmonic function  with Dirichlet boundary condition. We assume that its derivative $u_{\nu}$ in the direction of the outward unit normal vector $\nu$ is non-negative over $\partial M$. Then we have
\begin{align*}
    \frac{|\nabla u|}{u}\leq C_n\left(\frac{1}{R}+\sqrt{K}\right),
\end{align*}
on $B_{\frac{R}{2}}(\partial M)$, where $C_n$ is a positive constant depending only on $n$, and $B_R(\partial M):=\{x\in M|d(x,\partial M)<R\}$.
\end{theorem}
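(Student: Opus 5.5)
We follow Yau's strategy, with the geodesic ball replaced by the tube $B_R(\partial M)$ and the distance to a point replaced by $\rho:=d(\cdot,\partial M)$. Set $f=\log u$, so that $\Delta f=-|\nabla f|^2$, and put $Q=|\nabla f|^2$. The Bochner formula together with $\Ric\ge-(n-1)K$ and the refined Kato/Cauchy--Schwarz inequality $|\nabla^2 f|^2\ge \frac{1}{n}(\Delta f)^2$ (improved in the standard way using $\langle\nabla f,\nabla Q\rangle$) gives, wherever $Q>0$,
\[
\Delta Q\ \ge\ -2\langle\nabla f,\nabla Q\rangle+\frac{2}{n}Q^2-2(n-1)KQ .
\]
Choose a cutoff $\phi=\psi(\rho/R)$, where $\psi\equiv1$ on $[0,1/2]$, $\psi$ vanishes near $1$, $\psi'\le 0$, and $|\psi'|^2/\psi+|\psi''|\le C$. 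The essential point is that $\phi\equiv1$ near $\partial M$, so $\phi$ cuts off only at the far end of the tube, and we work with $G=\phi Q$ on $\overline{B}_R(\partial M)$.

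Kasue's Laplacian comparison \cite{Kasue1982}, under $\Ric\ge-(n-1)K$ and $H_{\partial M}\ge-(n-1)\sqrt K$, gives $\Delta\rho\le (n-1)\sqrt K\tanh$-type bounds; more crudely, $\Delta\rho\le(n-1)\sqrt K\coth(\sqrt K\rho)$ with the correct boundary model, and in fact $\Delta\rho\le (n-1)\sqrt{K}\cdot C$ for $\rho\ge R/2$ up to a term $C/R$. This holds in the barrier sense across $\Cut(\partial M)$, and Calabi's trick handles the cut locus as usual. Since $\psi'\le0$, this yields
\[
\Delta\phi\ \ge\ -\frac{C}{R^2}-\frac{C\sqrt K}{R}.
\]
If the maximum of $G$ is attained at an interior point, the usual computation ($\nabla G=0$, $\Delta G\le0$, multiplying through by $\phi$) gives
\[
G\le C_n\bigl(R^{-2}+K\bigr)
\]
exactly as in Yau's proof.

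The main obstacle, and the place where the hypotheses on $u$ enter, is the case in which the maximum of $G$ lies on $\partial M$. There $\phi\equiv1$ in a neighbourhood, so the maximum is also a boundary maximum of $Q$, and we need $\partial_\nu Q\le 0$ there to contradict the Hopf lemma, or at least to reduce to the interior inequality. Because $u$ is constant on $\partial M$, we have $\nabla u=u_\nu\,\nu$ on $\partial M$. Writing $\Delta u=0$ in the form
\[
u_{\nu\nu}+H u_\nu+\Delta_{\partial M}u=0,
\]
and using $\Delta_{\partial M}u=0$, we get $u_{\nu\nu}=-Hu_\nu$ (with the paper's sign convention for $H$). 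Hence
\[
\tfrac12\partial_\nu |\nabla u|^2=u_\nu u_{\nu\nu}=-Hu_\nu^2 ,
\qquad
\partial_\nu Q=\frac{2u_\nu}{u^2}\Bigl(u_{\nu\nu}-\frac{u_\nu^2}{u}\Bigr)=-\frac{2u_\nu^2}{u^2}\Bigl(H+\frac{u_\nu}{u}\Bigr).
\]
At a boundary maximum we may assume $Q\ge C(n-1)^2K$, since otherwise we are done. Then $u_\nu/u=\sqrt Q\ge (n-1)\sqrt K\ge -H$, where $u_\nu\ge0$ is used to fix the sign of $u_\nu/u$. This gives $\partial_\nu Q\le0$. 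A strict version, obtained by slightly enlarging the constant in the threshold, contradicts the Hopf boundary point lemma applied to the elliptic inequality for $Q$ (equivalently, for $G$ near the boundary, where $\phi=1$). This forces the maximum into the interior, and taking square roots on $B_{R/2}(\partial M)$, where $\phi=1$, yields the claimed estimate with $C_n$ depending only on $n$.
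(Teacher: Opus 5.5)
Your argument is correct, but it follows a different route from the paper. What you give is essentially the Yau-type cutoff argument of Kunikawa--Sakurai, which the paper only sketches in its introduction; the paper itself proves Theorem~\ref{thm:main}, which contains this statement as the case $p=2$.

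You run a single maximum principle for $G=\phi Q$ with $\phi=\psi(\rho/R)$ flat near $\partial M$. You exclude a boundary maximum through the identity $\partial_\nu Q=-2Q(H+\sqrt{Q})$ on $\partial M$, which is correct with the paper's sign convention. Two small corrections:

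\emph{Hopf's lemma and the enlarged threshold are unnecessary.} If $Q(x_0)>(n-1)^2K$, then $H+\sqrt{Q}>0$, so $\partial_\nu Q(x_0)<0$. This already contradicts the first-order condition $\partial_\nu Q(x_0)\ge 0$ at a maximum attained on $\partial M$.

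\emph{Kasue's comparison is simply $\Delta\rho\le(n-1)\sqrt{K}$ here.} The model is $e^{\sqrt{K}t}$ (Corollary~\ref{cor:lap-comp-tube}). The $\tanh$ form you mention is not implied by the hypotheses: at $\rho=0$ it would force $H\ge 0$.

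The paper uses no cutoff at all, and splits the argument in three:
\begin{itemize}
\item On $\partial M$ it bounds $|\nabla v|/v$ pointwise by comparing $v$ with a $p$-subharmonic radial barrier in $\rho$ (Lemma~\ref{lem:boundary}).
\item At an interior local maximum of $f$ on $\overline{B}_{R/2}(\partial M)$, a cutoff-free Bochner computation gives $f\le(n-1)^2K$ (Lemma~\ref{lem:critical}).
\item A maximum on $\{\rho=R/2\}$ is handled by the Wang--Zhang ball estimate, which for $p=2$ is Cheng--Yau (Lemma~\ref{lem:interior}).
\end{itemize}

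Your route is self-contained: it needs neither an external interior estimate nor a comparison principle. It does, however, rely on $C^2$ regularity of $u$ up to $\partial M$, and on the special fact that for $p=2$ your cutoff enters only through $|\nabla\rho|=1$ and $\Delta\rho$. For $p\neq 2$, the linearized operator applied to $\phi$ produces $\nabla^2\rho(\nabla u,\nabla u)$ on $\{R/2\le\rho<R\}$, which Ricci bounds do not control. The paper's decomposition uses $\rho$ only through $\nabla\rho$ and (distributional) $\Delta\rho$, and needs only $C^{1,\alpha}$ boundary regularity. That is what lets it cover all $1<p<\infty$, at the price of importing the Wang--Zhang estimate.
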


In particular, they can obtain the following Liouville theorem:

\begin{corollary}\label{cor-KS}
Let $M$ be a complete Riemannian manifold with compact boundary. We assume that $Ric_M\geq 0$ and $H_{\partial M}\geq 0$. Let $u:M\to (0,\infty)$ be a positive harmonic function  with Dirichlet boundary condition. We assume that $u_{\nu}\geq 0$ over $\partial M$. Then $u$ is constant.
\end{corollary}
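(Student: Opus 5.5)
The Liouville statement of Corollary~\ref{cor-KS} should follow from Theorem~\ref{thm-KS} by sending $R\to\infty$. Since $Ric_M\geq 0$ and $H_{\partial M}\geq 0$, the hypotheses of Theorem~\ref{thm-KS} hold with $K=0$: indeed $Ric_M\geq 0=-(n-1)\cdot 0$ and $H_{\partial M}\geq 0=-(n-1)\sqrt{0}$. The function $u$ is positive and harmonic on all of $M$. So for every $R>0$ its restriction to $B_R(\partial M)$ is a positive harmonic function there. It still satisfies the Dirichlet condition, and $u_\nu\geq 0$ on $\partial M$. Because the boundary is compact and nonempty, every point $x\in M$ lies at finite distance $d(x,\partial M)$ from $\partial M$. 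Hence $x\in B_{R/2}(\partial M)$ as soon as $R>2d(x,\partial M)$.

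Fix such an $x$. Theorem~\ref{thm-KS} with $K=0$ then gives
\begin{align*}
\frac{|\nabla u|}{u}(x)\leq \frac{C_n}{R}
\end{align*}
for all sufficiently large $R$. Letting $R\to\infty$ yields $|\nabla u|(x)=0$. Since $x$ was arbitrary, $\nabla u\equiv 0$ on $M$. We may take $M$ connected, which is implicit in its being a complete manifold with the given boundary, or else we argue componentwise. Then $u$ is constant.

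The only point requiring care is that the theorem is applied on $B_R(\partial M)$ for each $R$ separately, with a constant $C_n$ independent of $R$. This uniformity is exactly what the statement of Theorem~\ref{thm-KS} provides. The remaining check is that $B_R(\partial M)$ exhausts $M$, which uses the completeness of $M$ and that $d(\cdot,\partial M)$ is finite everywhere. I do not expect a genuine obstacle here. The substance lies entirely in Theorem~\ref{thm-KS}.
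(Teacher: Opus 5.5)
Your argument is correct and matches the paper's approach. The paper only quotes this corollary from Kunikawa--Sakurai without proof, but it proves the $p$-harmonic analogue (Corollary~\ref{cor:liouville}) exactly as you do: apply the estimate with $K=0$ on $B_R(\partial M)$ for $R>2d(x,\partial M)$ and let $R\to\infty$. Your aside ``or else we argue componentwise'' is too loose: a component not meeting $\partial M$ never enters any $B_R(\partial M)$, so there you would need Yau's boundaryless Liouville theorem (the paper uses Theorem~\ref{thm:WZ} in that case), and the conclusion would only be constancy on each component. This does not affect your proof, since the paper assumes $M$ connected throughout.
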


The purpose of the
present paper is to extend the above estimate from harmonic functions to $p$-harmonic
functions. 
The main estimate and the resulting Liouville theorem are as follows.

\begin{theorem}\label{thm:main}
Let $(M^{n},g)$ be a complete Riemannian manifold with compact boundary. Assume that
for some $K\geq 0$,
\[
\Ric_{M}\,\geq\,-(n-1)K, \qquad H_{\bdry}\,\geq\,-(n-1)\sqrt{K}.
\]
Let $1<p<\infty$, and let $v>0$ be $p$-harmonic on $\tube{R}$ for some $R>0$, constant
on $\bdry$, and satisfying $v_{\outnu}\geq 0$ along $\bdry$. Then there exists a constant
$C_{n,p}>0$, depending only on $n$ and $p$, such that
\begin{equation}\label{eq:main-estimate}
\sup_{\ctube{R/2}}\frac{\lvert\nabla v\rvert}{v}
\,\leq\,
C_{n,p}\Bigl(\frac{1}{R}+\sqrt{K}\Bigr).
\end{equation}
\end{theorem}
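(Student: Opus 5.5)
We will work throughout with $u=-(p-1)\log v$, so that $v$ being $p$-harmonic becomes
\[
\dv\bigl(|\nabla u|^{p-2}\nabla u\bigr)=|\nabla u|^{p}.
\]
The quantity to estimate is $w=|\nabla u|^{2}=(p-1)^{2}|\nabla v|^{2}/v^{2}$. The first step is the standard $p$-Bochner computation for the linearized operator
\[
\mathcal L(\psi)=\dv\Bigl(|\nabla u|^{p-2}\bigl(\nabla\psi+(p-2)\,|\nabla u|^{-2}\langle\nabla\psi,\nabla u\rangle\nabla u\bigr)\Bigr),
\]
carried out on the set where $\nabla u\neq0$ and using only $\Ric_M\geq-(n-1)K$. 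This is the computation of Kotschwar--Ni and Wang--Zhang. Following Wang--Zhang, the refined Kato inequality for $p$-harmonic functions lets us absorb the cross terms, and we obtain, on $\{w>0\}$,
\[
\mathcal L(w)\;\geq\;2\beta_{n,p}\,w^{\frac p2+1}-C_{n,p}\,w^{\frac{p-1}{2}}|\nabla w|-2(n-1)K\,w^{\frac p2}.
\]
Here $\beta_{n,p}>0$ depends only on $n$ and $p$. Near critical points we regularize in the usual way, replacing $w$ by $w+\varepsilon$, and let $\varepsilon\to0$ at the end. Since $v$ is only $C^{1,\alpha}$, the inequality is to be understood weakly; alternatively we work on $\{w>0\}$, where $v$ is smooth.

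For localization we cannot use $d(\cdot,x_0)$ composed with a cutoff and its Hessian, since Laplacian comparison controls only the trace. Instead we use the boundary distance $\rho=d(\cdot,\bdry)$. By Kasue's comparison, under $\Ric_M\geq-(n-1)K$ and $H_{\bdry}\geq-(n-1)\sqrt K$, we have $\Delta\rho\leq(n-1)\sqrt K\,\coth$-type bounds, and in fact $\Delta\rho\leq(n-1)\sqrt K$ in the barrier sense off $\Cut(\bdry)$. The operator $\mathcal L$, however, involves $\nabla^{2}\rho(\nabla u,\nabla u)$, which trace comparison does not control; this is exactly the obstruction mentioned in the abstract. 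To get around it we take a cutoff $\phi(\rho)$ with $\phi=1$ on $[0,R/2]$, $\phi=0$ for $\rho\geq R$, and $|\phi'|^{2}/\phi+|\phi''|\leq C/R^{2}$, and we consider $G=\phi w$ at an interior maximum point $x_1$. We handle two cases.

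\textbf{Case 1: the maximum is at $x_1\in\bdry$.} There $\phi=1$. Since $v$ is constant on $\bdry$, $\nabla v=v_{\nu}\nu$. The boundary term $\partial_\nu w$ is computed from $u_\nu$ together with the $p$-harmonic equation restricted to $\bdry$: it gives $u_{\nu\nu}$ in terms of $H_{\bdry}u_\nu$ times a $p$-dependent factor. With $v_\nu\geq0$, so that $u_\nu\leq0$, and $H_{\bdry}\geq-(n-1)\sqrt K$, this yields $\partial_\nu w\leq C\sqrt K\,w^{3/2}$-type control. This case is the \emph{radial barrier} step. Rather than using the Hopf lemma alone, we compare $w$ near $\bdry$ with a barrier of the form
\[
A\bigl(\sqrt K+1/R\bigr)^{2}\bigl(1+\text{const}\cdot\rho\bigr),
\]
so that a boundary maximum of $G$ exceeding the claimed bound contradicts the normal-derivative sign. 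I expect this to be the main obstacle: one must check that at a boundary point the first-order condition $\partial_\nu G\geq0$, combined with the equation, forces $w\leq C(\sqrt K+1/R)^{2}$, and the degeneracy in $p$ makes the coefficient bookkeeping delicate.

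\textbf{Case 2: $x_1$ is interior.} If $x_1\in\Cut(\bdry)$, Calabi's trick lets us replace $\rho$ by a smooth upper barrier. At $x_1$ we have $\nabla G=0$ and $\mathcal L(G)\leq0$, and we expand $\mathcal L(\phi w)$. The term $w\,\mathcal L(\phi)$ contains $\phi'\,|\nabla u|^{p-2}\bigl(\Delta\rho+(p-2)\nabla^{2}\rho(e,e)\bigr)$ with $e=\nabla u/|\nabla u|$. We avoid $\nabla^{2}\rho(e,e)$ by writing $\mathcal L(\phi)$ in divergence form and testing, which is the \emph{intrinsic maximum principle}. Concretely, we apply a weak maximum principle to the function $\phi w$ with respect to the divergence-form operator, integrating against $(\phi w-k)_+$. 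This uses only first derivatives of $\rho$ together with $\Delta\rho$, which is in effect a Moser-type argument confined to the level set where $\phi w$ is large. It yields $\sup\phi w\leq C_{n,p}(1/R^{2}+K)$.

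Combining the two cases on $\ctube{R/2}$, where $\phi=1$, and converting back via $|\nabla v|/v=|\nabla u|/(p-1)$ gives \eqref{eq:main-estimate}. The constants depend only on $n$ and $p$ because $\beta_{n,p}$, the Kato constant and the comparison constants do.
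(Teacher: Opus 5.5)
The gap is in your Case 2. You concede that the pointwise computation of $\mathcal L(\phi(\rho)w)$ produces $\nabla^2\rho(e,e)$, and "testing against $(\phi w-k)_+$" does not remove that difficulty. After integration by parts the cutoff contributes terms $\int|\nabla u|^{p-2}\langle A\nabla\phi,\nabla(\cdot)\rangle$ with no sign. Turning the resulting Caccioppoli-type inequality into $\sup\phi w\le C_{n,p}(R^{-2}+K)$ then requires a full De Giorgi/Moser iteration, not a maximum principle. That iteration needs three things you do not supply:
\begin{itemize}
\item a Sobolev inequality on the tube $\tube{R}$ for functions that do not vanish on $\bdry$, with constants depending only on $n,p$ after scaling;
\item control of the flux of $w$ across $\bdry$;
\item an initial integral bound on $w$.
\end{itemize}
None of these follows in a standard way from $\Ric_M\ge-(n-1)K$ and $H_{\bdry}\ge-(n-1)\sqrt K$. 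Invoking Calabi's trick and $\nabla G(x_1)=0$ in the same step also conflates a pointwise argument with an integral one.

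The idea you are missing is that no cutoff is needed. Maximize $w$ itself over the compact set $\ctube{R/2}$, at a point $x_0$.
\begin{itemize}
\item If $0<\rho(x_0)<R/2$, this is a genuine interior local maximum. So $\nabla w=0$ and $\mathcal L(w)=|\nabla u|^{p-2}A:\nabla^2 w\le 0$ there, with $A=I+(p-2)\nabla u\otimes\nabla u/w$ positive definite. Your own Bochner inequality then gives $w(x_0)\le (n-1)K/\beta_{n,p}$. The paper gets $(n-1)^2K$ from $u_{1i}=0$, $\sum_{i\ge 2}u_{ii}=w$, and the cancellation of the $\nabla^2 w(\nabla u,\nabla u)$ terms.
\item If $\rho(x_0)=R/2$, then $B(x_0,R/8)\subset\{R/8<\rho<5R/8\}$ misses $\bdry$. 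Wang--Zhang's interior estimate on that ball gives $C_{n,p}(1/R+\sqrt K)$, and $\nabla^2\rho$ never appears.
\end{itemize}

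Your Case 1 is also easier than you expect and needs no barrier on $w$. Let $N$ be the inward normal and suppose the maximum is at a boundary point with $w>0$.
\begin{itemize}
\item There $u$ is smooth up to $\bdry$ and $\nabla u=u_N\,N$, with $u_N=\sqrt w\ge 0$ because $v_\nu\ge 0$.
\item On $\bdry$ one has $\Delta u=u_{NN}+u_N\Delta\rho$. This turns the equation $\Delta u+(p-2)u_{NN}=w$ into $\partial_N w=2u_Nu_{NN}=\frac{2}{p-1}\bigl(w^{3/2}-w\Delta\rho\bigr)$.
\item At the maximum $\partial_N w\le 0$, and $\Delta\rho\le(n-1)\sqrt K$ on $\bdry$, so $\sqrt w\le (n-1)\sqrt K$.
\end{itemize}
Your stated form ``$\partial_\nu w\le C\sqrt K\,w^{3/2}$'' misses that $v_\nu\ge 0$ produces this favorable $w^{3/2}$ term. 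The paper handles the boundary differently. It bounds $|\nabla v|/v$ along all of $\bdry$ by comparing $v$ with the $p$-subharmonic radial barrier $\psi\circ\rho$, where $(p-1)\psi''+(n-1)\sqrt K\,\psi'=0$. That argument uses $\Delta\rho\le(n-1)\sqrt K$ only in the distributional sense.
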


\begin{corollary}\label{cor:liouville}
Let $M$ be a complete Riemannian manifold with compact boundary. We assume that $Ric_M\geq 0$ and $H_{\partial M}\geq 0$. Let $v:M\to (0,\infty)$ be a positive $p$-harmonic function  with Dirichlet boundary condition. We assume that $v_{\nu}\geq 0$ over $\partial M$. Then $v$ is
constant on each connected component of $M$.
\end{corollary}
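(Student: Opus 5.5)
The statement to prove is Corollary~\ref{cor:liouville}, and it should follow from Theorem~\ref{thm:main} by letting $R\to\infty$. Set $K=0$, which is allowed since $\Ric_M\geq 0$ and $H_{\bdry}\geq 0$ give the hypotheses of Theorem~\ref{thm:main} with $K=0$. Let $v>0$ be $p$-harmonic on $M$, constant on $\bdry$, with $v_{\outnu}\geq 0$.

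For every $R>0$, the restriction of $v$ to $\tube{R}$ satisfies all the assumptions of Theorem~\ref{thm:main}. The estimate \eqref{eq:main-estimate} therefore gives
\[
\sup_{\ctube{R/2}}\frac{|\nabla v|}{v}\leq \frac{C_{n,p}}{R}.
\]
Now fix any point $x\in M$ and let $R>2\,d(x,\bdry)$. Then $x$ lies in $\ctube{R/2}$, so $|\nabla v|(x)/v(x)\leq C_{n,p}/R$. Letting $R\to\infty$ shows $\nabla v(x)=0$.

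Since $M$ is complete with compact boundary, every point of $M$ has finite distance to $\bdry$, at least on every component that meets $\bdry$. Hence $\nabla v\equiv 0$ on $M$. Recall that $p$-harmonic functions are $C^{1,\alpha}$, so $\nabla v$ is defined pointwise. It follows that $v$ is locally constant, i.e.\ constant on each connected component of $M$.

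The main subtlety is a component of $M$ with empty boundary, where $d(x,\bdry)=\infty$. For a connected $M$ with $\bdry\neq\emptyset$ this cannot happen, and it is presumably the intended setting. Otherwise, on a boundaryless complete component with $\Ric\geq 0$, I would instead invoke the known interior Liouville theorem for positive $p$-harmonic functions, from the Kotschwar--Ni / Wang--Zhang estimates \cite{KotschwarNi2009,WangZhang2011}. Alternatively, I would interpret $\tube{R}$ as containing only the components meeting $\bdry$. Apart from this point the argument is just the limiting step described above.
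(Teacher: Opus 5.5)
Your proposal is correct and follows the paper's proof essentially verbatim. You apply Theorem~\ref{thm:main} with $K=0$ on $\tube{R}$ for every $R>2\rho(x)$, let $R\to\infty$, and on any component not meeting $\bdry$ use the interior estimate instead. For that last case, it is Theorem~\ref{thm:WZ} (Wang--Zhang) that does the job, since Kotschwar--Ni requires a sectional curvature lower bound rather than only $\Ric_M\geq 0$.
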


In Kunikawa and Sakurai ~\cite{KunikawaSakurai2022}, the elliptic estimate is obtained from the maximum of
\(F:=(R^2-\rho_{\partial M}^2)\phi\) with \(\phi:=\lvert\nabla u\rvert/u\).
The Laplacian of this product only sees
\(\Delta(\rho_{\partial M}^2)=2+2\rho_{\partial M}\,\Delta\rho_{\partial M}\)
and \(\lvert\nabla\rho_{\partial M}^2\rvert^2=4\rho_{\partial M}^2\),
so Laplacian comparison of \(\rho_{\partial M}\) is enough.
For \(p\neq 2\) the same radial factor, inserted into the linearized
operator \(A=I+(p-2)\frac{\nabla v\otimes\nabla v}{\lvert\nabla v\rvert^2}\),
produces the extra term \(\operatorname{Hess}\rho(\nabla v,\nabla v)\),
which is not controlled by Ricci curvature assumption. On the other hand, Wang and Zhang's interior iteration differentiates the distance only once
and avoids \(\operatorname{Hess}\rho\), but a cutoff in the coordinate
\(\rho\) near \(\partial M\) produces the same term (\cite{WangZhang2011}). The method available
for \(p=2\) near the boundary and the method available for \(p\neq 2\)
in the interior therefore cannot be combined directly by a routine adaptation.

The argument below is designed to circumvent these obstructions.
Instead of cutting off the energy, a radial barrier that is
$p$-subharmonic in the weak sense is constructed near the
boundary;
comparison then yields a bound on the normal derivative, using the distance only
through its gradient and its Laplacian. At an interior maximum of the energy density,
an intrinsic Bochner computation gives a pointwise bound from the Ricci lower bound
alone, with no cutoff. Away from the boundary, the estimate of Wang and Zhang applies
on geodesic balls that do not meet the boundary. Because the maximum principle already
supplies a pointwise bound, Moser iteration is unnecessary.

The subsequent sections are organized as follows: 
Section~\ref{sec:prelim} records the comparison theorems and regularity results used
later. Section~\ref{sec:lemmas} proves the auxiliary bounds, and Section~\ref{sec:proofs}
completes the argument.

\vspace{.2in}

\section{Preliminaries}\label{sec:prelim}

\vspace{.1in}

In this section, we will collect some fundamental materials that will be used in the proof of our main theorem.

Throughout the paper, $(M^{n},g)$ denotes a smooth, complete, connected Riemannian $n$-manifold
with smooth compact boundary $\bdry$, $n\geq 2$. The Riemannian distance is written by $d$,
and the distance to the boundary is
\[
\rho(x)\,:=\,d\bigl(x,\bdry\bigr).
\]
For $R>0$ write
\[
\tube{R}\,:=\,\bigl\{x\in M:\rho(x)<R\bigr\}
\]
for the open $R$-neighborhood of the boundary, and $\ctube{R}$ for its
closure in $M$. Completeness of $M$ and compactness of $\bdry$ imply that
$\ctube{R}$ is compact. The cut locus of the boundary is denoted
$\Cut(\bdry)$; the function $\rho$ is smooth on $\Mreg$,
and $\lvert\nabla\rho\rvert=1$ there. Let $\innerN$ denote the inward unit normal field
along $\bdry$ and $\outnu:=-\innerN$ the outward unit normal. Along \(\partial M\) one has \(\nabla\rho=N\). 

The mean curvature $H$ of $\bdry$ is the trace of the shape operator associated with the
inward unit normal $\innerN$, following the convention of Kunikawa and Sakurai
\cite{KunikawaSakurai2022}. Equivalently, on $\bdry$,
\begin{equation}\label{eq:H-vs-Delta-rho}
H\,=\,-\Delta\rho.
\end{equation}
Let $H_{\bdry}$ denote the infimum of $H$ on $\bdry$. In particular, the lower bound
$H_{\bdry}\geq -(n-1)\sqrt{K}$ is equivalent to $\Delta\rho\leq(n-1)\sqrt{K}$ along
$\bdry$.

For $1<p<\infty$, a function $v$ of class $W^{1,p}_{\mathrm{loc}}$ is said to be
\emph{$p$-harmonic} on an open set $\Omega\subset M$ if it is a weak solution of the
$p$-Laplace equation
\[
\pLap v\,:=\,\dv\bigl(\lvert\nabla v\rvert^{p-2}\nabla v\bigr)\,=\,0
\quad\text{in }\Omega,
\]
that is,
\[
\int_{\Omega}\bigl\langle\lvert\nabla v\rvert^{p-2}\nabla v,\,\nabla\eta\bigr\rangle\,d\mathrm{vol}
\,=\,0
\qquad\text{for every }\eta\in C^{\infty}_{c}(\Omega).
\]
The function $v$ is called \emph{$p$-subharmonic} if $\pLap v\geq 0$ in the weak sense, i.e., if
the same integral is nonpositive for every nonnegative test function. A function $v$
satisfies the Dirichlet condition on $\bdry$ if $v$ is constant along $\bdry$. Partial
derivatives in the inward and outward normal directions are written $v_{\rho}$ and
$v_{\outnu}$ respectively, so that $v_{\outnu}=-v_{\rho}$ along $\bdry$ whenever the
derivatives exist.

\vspace{.1in}

The following is Kasue's Laplacian comparison theorem for the distance to the boundary. For
real numbers $\kappa$ and $\Lambda$, let $s_{\kappa,\Lambda}$ denote the unique solution
of the Jacobi equation $\varphi''+\kappa\varphi=0$ with $\varphi(0)=1$ and
$\varphi'(0)=-\Lambda$.

\begin{theorem}[\cite{Kasue1982}; cf.\ also \cite{KunikawaSakurai2022}]\label{thm:kasue}
Let $\kappa,\Lambda\in\mathbb{R}$. Assume $\Ric_{M}\geq(n-1)\kappa$ and
$H_{\bdry}\geq(n-1)\Lambda$. Then
\[
\Delta\rho\,\leq\,(n-1)\,\frac{s'_{\kappa,\Lambda}(\rho)}{s_{\kappa,\Lambda}(\rho)}
\]
pointwise on $\Mreg$, and the same inequality holds
on $\tube{R}$ in the sense of distributions.
\end{theorem}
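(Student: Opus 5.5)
We follow the classical route through Riccati comparison along the normal geodesics issuing from $\bdry$, and then pass to the distributional statement by the standard cut-locus argument.

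\emph{Pointwise inequality.} Fix $x\in\Mreg$. Let $\gamma:[0,\ell]\to M$ be the unit-speed geodesic with $\gamma(0)\in\bdry$, $\gamma'(0)=\innerN$ and $\gamma(\ell)=x$, where $\ell=\rho(x)$. Because $x\notin\Cut(\bdry)$, the function $\rho$ is smooth in a neighbourhood of $\gamma((0,\ell])$ and $\nabla\rho=\gamma'$ along $\gamma$. Put $m(t):=\Delta\rho(\gamma(t))$. Apply the Bochner formula to $\rho$ with $\lvert\nabla\rho\rvert\equiv1$, and use $\lvert\nablasq\rho\rvert^{2}\geq(\Delta\rho)^{2}/(n-1)$; this holds because $\nablasq\rho(\nabla\rho,\cdot)=0$, so the Hessian has rank at most $n-1$. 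Together with the Ricci bound this gives the Riccati inequality
\[
m'+\frac{m^{2}}{n-1}\leq-\Ric(\gamma',\gamma')\leq-(n-1)\kappa .
\]
The initial value is $m(0)=\Delta\rho=-H\leq-(n-1)\Lambda$ by \eqref{eq:H-vs-Delta-rho}. The model function $\bar m:=(n-1)s'_{\kappa,\Lambda}/s_{\kappa,\Lambda}$ satisfies the corresponding Riccati equation with equality and has $\bar m(0)=-(n-1)\Lambda$. We must check that $s_{\kappa,\Lambda}>0$ on $[0,\ell]$. If $s_{\kappa,\Lambda}$ had a zero $t_0\leq\ell$, then $m$ would be forced to $-\infty$ at or before $t_0$, which would produce a focal point of $\bdry$ along $\gamma$ before $x$. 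That contradicts $x\notin\Cut(\bdry)$.

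\emph{ODE comparison.} Set $w:=m-\bar m$. Then $w'\leq -\frac{(m+\bar m)}{n-1}\,w$ and $w(0)\leq0$. Since the coefficient is continuous on $[0,\ell]$, an integrating factor gives $w\leq0$ on $[0,\ell]$. Evaluating at $t=\ell$ proves the pointwise inequality.

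\emph{Distributional inequality.} This is the step I expect to be the main obstacle. $\Cut(\bdry)$ is closed and of measure zero, and $\rho$ is Lipschitz. For a nonnegative test function $\eta$ supported in $\tube{R}$, we need
\[
-\int\langle\nabla\rho,\nabla\eta\rangle\leq\int(n-1)\frac{s'_{\kappa,\Lambda}(\rho)}{s_{\kappa,\Lambda}(\rho)}\,\eta .
\]
There are two ways to get it. The first is Calabi's support-function trick: near each $x\in\Cut(\bdry)$, take the upper barrier $\rho_\varepsilon:=\varepsilon+d(\cdot,\gamma_x(\varepsilon))$. It touches $\rho$ from above, and its Laplacian obeys the same comparison up to $o(1)$ as $\varepsilon\to0$. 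Hence the inequality holds in the barrier (viscosity) sense, and barrier sub/supersolutions of linear equations with continuous right-hand side are distributional solutions. The second, more explicit, way is to integrate in Fermi coordinates $(t,y)\in[0,\tau(y))\times\bdry$, with $\tau$ the cut time. Integration by parts in $t$ along each normal segment produces the interior term plus boundary terms. The term at $t=\tau(y)$ has a favourable sign, because the Jacobian $\theta(\tau(y),y)\geq0$ and $\eta\geq0$. The term at $t=0$ vanishes, since $\eta$ has compact support in the interior. This yields the required inequality.

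In both approaches, the delicate point is the sign of the cut-locus contribution. I would present the second approach, because it uses only the measure-zero property and the continuity of $\tau$.
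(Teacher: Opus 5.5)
Your chosen route is correct. It is essentially the standard proof behind the references the paper cites; the paper itself gives no argument and simply defers to Kasue. The pointwise bound comes from Riccati comparison along the normal geodesic. The distributional bound comes from integrating over the Fermi domain $\{0\le t<\tau(y)\}$, using $\partial_t\theta=\theta\,\Delta\rho$, the sign $-\eta\theta\le 0$ at $t=\tau(y)$, and the vanishing of the $t=0$ term. That last point is a necessity, not a convenience. If $\eta$ does not vanish on $\partial M$, the $t=0$ term equals $+\int_{\partial M}\eta$, which has the wrong sign. So the distributional inequality must be read with test functions supported in the interior, and this is exactly how it is used in Lemma~\ref{lem:boundary}, with test functions in $C^\infty_c(\Omega_\delta)$.

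The Calabi alternative you sketch, however, fails as written. The function $\varepsilon+d(\cdot,\gamma_x(\varepsilon))$ is a legitimate upper support function for $\rho$ at $x$. But its Laplacian at $x$ is determined by Jacobi fields vanishing at $\gamma_x(\varepsilon)$, so it carries no information about the second fundamental form of $\partial M$ and yields only the point-distance bound. For example, in the flat half-space with $H=0$ the model is $\bar m\equiv 0$. There the barrier's Laplacian at $x$ is $(n-1)/(\ell-\varepsilon)\to(n-1)/\ell$, not $o(1)$.

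A working barrier is $\varepsilon+d(\cdot,\Sigma)$, where $\Sigma$ is a small hypersurface through $\gamma_x(\varepsilon)$ with these properties:
\begin{itemize}
\item it is orthogonal to $\gamma_x$ and contained in $\{\rho\le\varepsilon\}$;
\item it is bent slightly toward $\partial M$ relative to the level set $\{\rho=\varepsilon\}$.
\end{itemize}
Then $x$ is not focal to $\Sigma$, and the Riccati comparison starts from a value at most $\Delta\rho(\gamma_x(\varepsilon))+O(\delta)\le\bar m(\varepsilon)+O(\delta)$. Since you intend to present the Fermi-coordinate argument, drop or correct this remark.

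Finally, there is a small ordering issue. Your proof that $s_{\kappa,\Lambda}>0$ on $[0,\ell]$ uses the comparison $m\le\bar m$ before it is established. Instead, let $t_0$ be the first zero of $s_{\kappa,\Lambda}$ and run the comparison on $[0,\min\{t_0,\ell\})$. Then $\bar m\to-\infty$ as $t\uparrow t_0$, which contradicts the finiteness of $m$ on $[0,\ell]$. No focal-point language is needed.
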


The distributional statement is recorded in~\cite[p.~119]{Kasue1983},
following~\cite[Corollary~2.44]{Kasue1982}. Specializing to a Ricci lower bound of the form
\(-(n-1)K\) yields the comparison used throughout the paper. 

\begin{corollary}\label{cor:lap-comp-tube}
Let $K\geq 0$ and assume
\[
\Ric_{M}\,\geq\,-(n-1)K, \qquad H_{\bdry}\,\geq\,-(n-1)\sqrt{K}.
\]
Set $\mu:=(n-1)\sqrt{K}$. Then $\Delta\rho\leq\mu$ pointwise on
$\Mreg$, and $\Delta\rho\leq\mu$ holds on
$\tube{R}$ in the sense of distributions. In particular, $\Delta\rho\leq\mu$ along
$\bdry$.
\end{corollary}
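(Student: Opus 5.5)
The plan is to obtain the corollary by specializing Theorem~\ref{thm:kasue} to the parameters $\kappa:=-K$ and $\Lambda:=-\sqrt{K}$, and then computing the model function $s_{\kappa,\Lambda}$ explicitly. With these choices the hypotheses $\Ric_{M}\geq(n-1)\kappa$ and $H_{\bdry}\geq(n-1)\Lambda$ become exactly $\Ric_{M}\geq-(n-1)K$ and $H_{\bdry}\geq-(n-1)\sqrt{K}$, so Theorem~\ref{thm:kasue} applies directly.

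The key step is to solve the Jacobi equation, which now reads
\[
\varphi''-K\varphi=0,\qquad \varphi(0)=1,\qquad \varphi'(0)=\sqrt{K}.
\]
Its unique solution is $s_{\kappa,\Lambda}(t)=e^{\sqrt{K}\,t}$. This covers also $K=0$, where it reduces to $s\equiv1$. Since this function is positive for all $t\geq0$, the comparison function has no blow-up on $[0,\infty)$. Moreover $s'_{\kappa,\Lambda}/s_{\kappa,\Lambda}\equiv\sqrt{K}$, so the bound of Theorem~\ref{thm:kasue} becomes
\[
\Delta\rho\leq(n-1)\sqrt{K}=\mu
\]
pointwise on $\Mreg$. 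The distributional version on $\tube{R}$ is the distributional statement of Theorem~\ref{thm:kasue} with the same constant right-hand side. In the ratio form, this means that for nonnegative $\eta\in C^\infty_c(\tube{R})$ one has $-\int\langle\nabla\rho,\nabla\eta\rangle\leq\mu\int\eta$.

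For the boundary assertion I would not pass to the limit from the interior. Instead I would use \eqref{eq:H-vs-Delta-rho} directly. Near $\bdry$, $\rho$ is smooth, since the normal exponential map is a diffeomorphism on a collar, so $\Delta\rho$ is defined on $\bdry$ and equals $-H$ there. Then $H\geq H_{\bdry}\geq-(n-1)\sqrt{K}$ gives $\Delta\rho\leq\mu$ along $\bdry$. This is consistent with the interior bound by continuity of $\Delta\rho$ on the collar.

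There is no real obstacle here. The only point needing care is sign conventions: $\varphi'(0)=-\Lambda$ must give $+\sqrt{K}$, and $H$ is taken with respect to the inward normal, so that $H=-\Delta\rho$. Both conventions are as fixed in the preliminaries.
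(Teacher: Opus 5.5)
Your proposal is correct and follows the paper's proof exactly: specialize Theorem~\ref{thm:kasue} to $\kappa=-K$, $\Lambda=-\sqrt{K}$, note that $s_{\kappa,\Lambda}(t)=e^{\sqrt{K}\,t}$ (so $s'/s\equiv\sqrt{K}$, with $K=0$ included), and read off the boundary inequality directly from \eqref{eq:H-vs-Delta-rho}. Your remark that $s_{\kappa,\Lambda}$ never vanishes on $[0,\infty)$, so the comparison holds at every distance, is a point the paper leaves implicit.
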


\begin{proof}
Apply Theorem~\ref{thm:kasue} with $\kappa=-K$ and $\Lambda=-\sqrt{K}$. For $K>0$ one
has $s_{-K,-\sqrt{K}}(t)=e^{\sqrt{K}\,t}$, hence
$s'/s=\sqrt{K}$. For $K=0$ the model solution is $s_{0,0}(t)=1$ and $s'/s=0$. The
boundary inequality is~\eqref{eq:H-vs-Delta-rho}.
\end{proof}

\vspace{.1in}

The following local gradient estimate for $p$-harmonic functions due to Wang and Zhang will be applied on geodesic balls that do not meet
the boundary.

\begin{theorem}[\cite{WangZhang2011}]\label{thm:WZ}
Let $(M^{n},g)$ be a complete Riemannian manifold with $\Ric_{M}\geq-(n-1)K$ for some
$K\geq 0$. Let $1<p<\infty$, and let $v>0$ be $p$-harmonic on a geodesic ball $B(o,r)$.
Then there exists a constant $C_{n,p}>0$, depending only on $n$ and $p$, such that
\[
\sup_{B(o,r/2)}\frac{\lvert\nabla v\rvert}{v}
\,\leq\,
C_{n,p}\,\frac{1+\sqrt{K}\,r}{r}.
\]
\end{theorem}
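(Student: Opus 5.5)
The plan is to follow the Moser-iteration scheme of Wang and Zhang, so that the distance function is differentiated only once, inside cutoff functions. Set $u:=-(p-1)\log v$. Then $p$-harmonicity of $v$ becomes
\[
\Delta_p u \,=\, \lvert\nabla u\rvert^{p} \quad\text{on } B(o,r),
\]
and it suffices to bound $f:=\lvert\nabla u\rvert^{2}$ on $B(o,r/2)$ by $C_{n,p}(1+\sqrt{K}r)^2/r^2$.

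\textbf{Step 1: regularity.} Since $v\in C^{1,\alpha}_{\mathrm{loc}}$ and $v$ is smooth where $\nabla v\neq 0$, all pointwise computations are done on $\{f>0\}$. Integral identities are justified by a standard regularization: either replace $\lvert\nabla u\rvert^{p-2}$ by $(\lvert\nabla u\rvert^2+\varepsilon)^{(p-2)/2}$ and let $\varepsilon\to0$, or use test functions supported where $f>\varepsilon$.

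\textbf{Step 2: a Bochner-type inequality.} Consider the linearized operator
\[
\mathcal L(\psi):=\dv\bigl(f^{p/2-1}A(\nabla\psi)\bigr)-p\,f^{p/2-1}\langle\nabla u,\nabla\psi\rangle,
\qquad A=I+(p-2)\frac{\nabla u\otimes\nabla u}{\lvert\nabla u\rvert^2}.
\]
Using the Bochner formula, $\Ric\geq-(n-1)K$, and the refined Kato/Cauchy--Schwarz inequality for $\nabla^2u$, I would derive, on $\{f>0\}$,
\[
\mathcal L(f)\,\geq\,2\beta_{n,p}\,f^{p/2+1}-2(n-1)K f^{p/2}-a_{n,p}\,\lvert\nabla f\rvert\, f^{(p-1)/2}.
\]
Here the equation is used to convert $(\Delta_p u)^2$ into $f^{p}$, which produces the positive term $f^{p/2+1}$.

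\textbf{Step 3: integral bound for a large power.} Multiply Step 2 by $f^{b}\eta^{2}$, with $\eta$ a Lipschitz cutoff supported in $B(o,r)$, equal to $1$ on $B(o,3r/4)$, and $\lvert\nabla\eta\rvert\leq C/r$; then integrate by parts. Because $A$ is uniformly elliptic with bounds $\min(1,p-1)$ and $\max(1,p-1)$, this yields control of
\[
\int\bigl\lvert\nabla\bigl(f^{(p/2+b)/2}\eta\bigr)\bigr\rvert^2
\quad\text{and}\quad
\int f^{p/2+b+1}\eta^2
\]
by lower-order terms. Apply Saloff-Coste's local Sobolev inequality on $B(o,r)$, whose constants are $e^{C_n(1+\sqrt{K}r)}V^{-2/n}r^2$. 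Choosing $b_0\sim C_{n,p}(1+\sqrt{K}r)$ large, Young's inequality absorbs the remaining terms into the good term $f^{p/2+b+1}$. This gives
\[
\lVert f\rVert_{L^{\beta}(B(o,3r/4))}\leq C_{n,p}\,\frac{(1+\sqrt{K}r)^2}{r^2}\,V^{1/\beta},
\qquad \beta=\tfrac{n}{n-2}\,(b_0+p/2+1),
\]
where $V=\operatorname{vol}B(o,r)$.

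\textbf{Step 4: iteration to a sup bound.} Run the Moser iteration on shrinking balls from radius $3r/4$ down to $r/2$, with exponents multiplied by $n/(n-2)$ at each step; the case $n=2$ is handled by using any Sobolev exponent greater than $2$. This yields
\[
\sup_{B(o,r/2)} f\leq C\,V^{-1/\beta}\lVert f\rVert_{L^\beta(B(o,3r/4))}\leq C_{n,p}\,\frac{(1+\sqrt{K}r)^2}{r^2}.
\]
Since $\lvert\nabla v\rvert/v=\sqrt f/(p-1)$, taking square roots gives the claimed estimate.

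The main obstacle is Step 3. The exponents must be chosen so that the factor $e^{C_n(1+\sqrt{K}r)}$ from the Sobolev constant is exactly compensated by $b_0\propto 1+\sqrt{K}r$. I would first follow Wang--Zhang's choice $b_0=c_{n,p}(1+\sqrt{K}r)$ and check that each cross term $f^{b+(p-1)/2}\lvert\nabla f\rvert\eta^2$ and each cutoff term $\lvert\nabla\eta\rvert^2$ is absorbed with constants uniform in $b\geq b_0$.
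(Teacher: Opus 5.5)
Your proposal is correct and takes essentially the same route as the paper. The paper gives no proof of its own; it cites Wang--Zhang, and your sketch reproduces their argument: the pointwise inequality for $\mathcal{L}(f)$ with $u=-(p-1)\log v$, then Saloff-Coste's local Sobolev inequality with $b_0\propto 1+\sqrt{K}r$ chosen to neutralize the factor $e^{C_n(1+\sqrt{K}r)}$, then Moser iteration from $B(o,3r/4)$ to $B(o,r/2)$, so the distance is differentiated only once through the cutoff. One harmless bookkeeping slip: testing with $f^{b}\eta^{2}$ and applying Sobolev to $f^{(p/2+b)/2}\eta$ gives $\beta=\frac{n}{n-2}(b_0+p/2)$ rather than $\frac{n}{n-2}(b_0+p/2+1)$, which changes nothing since $b_0$ is only fixed up to constants.
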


Unlike the estimate of Kotschwar and Ni \cite{KotschwarNi2009}, which requires a lower
bound on sectional curvature, Theorem~\ref{thm:WZ} uses only a Ricci bound. The proof in
\cite{WangZhang2011} proceeds by Moser iteration and differentiates the distance function
only once.

\vspace{.1in}

The weak comparison principle for the $p$-Laplace operator on bounded
domains will also be used; see Lindqvist \cite{Lindqvist2006}.

\begin{proposition}[Weak comparison principle]\label{prop:weak-comparison}
Let $\Omega\subset M$ be a bounded open set, $1<p<\infty$, and let
$a,v\in W^{1,p}(\Omega)\cap C(\overline{\Omega})$. Suppose
\[
-\pLap a\,\leq\,-\pLap v
\quad\text{in the weak sense on }\Omega,
\]
that is,
\[
\int_{\Omega}
\bigl\langle
\lvert\nabla a\rvert^{p-2}\nabla a
-
\lvert\nabla v\rvert^{p-2}\nabla v,\,
\nabla\eta
\bigr\rangle\,d\mathrm{vol}
\,\leq\,0
\]
for every nonnegative $\eta\in C^{\infty}_{c}(\Omega)$, and suppose $a\leq v$ on
$\partial\Omega$. Then $a\leq v$ on $\overline{\Omega}$.
\end{proposition}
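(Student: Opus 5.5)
The statement is the weak comparison principle of Proposition~\ref{prop:weak-comparison}. The plan is the standard monotonicity argument: test the weak inequality with the positive part of $a-v$, after a small shift to obtain compact support. Write
\[
\mathcal{A}(\xi):=\lvert\xi\rvert^{p-2}\xi .
\]
Fix $\varepsilon>0$ and set $w_\varepsilon:=(a-v-\varepsilon)^{+}$.

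\emph{Admissibility of $w_\varepsilon$.} Since $a,v\in C(\overline{\Omega})$ and $a\leq v$ on $\partial\Omega$, the closed set $\{a-v\geq\varepsilon\}\subset\overline{\Omega}$ is disjoint from $\partial\Omega$. Because $\Omega$ is bounded, $\overline{\Omega}$ is compact, so this set is a compact subset of $\Omega$. Hence $w_\varepsilon\in W^{1,p}(\Omega)$ is nonnegative and has compact support in $\Omega$, with
\[
\nabla w_\varepsilon=\nabla(a-v)\,\chi_{\{a-v>\varepsilon\}}
\]
almost everywhere.

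\emph{Extension of the test class.} The hypothesis is stated only for nonnegative $\eta\in C^\infty_c(\Omega)$, so we pass to $w_\varepsilon$ by approximation. Mollify $w_\varepsilon$ in finitely many charts, using a partition of unity on a neighbourhood of its support. This produces nonnegative $\eta_k\in C^\infty_c(\Omega)$ with $\eta_k\to w_\varepsilon$ in $W^{1,p}$. The vector field $\mathcal{A}(\nabla a)-\mathcal{A}(\nabla v)$ lies in $L^{p/(p-1)}$, so the integral inequality passes to the limit and gives
\[
\int_{\{a-v>\varepsilon\}}\bigl\langle \mathcal{A}(\nabla a)-\mathcal{A}(\nabla v),\,\nabla a-\nabla v\bigr\rangle\,d\mathrm{vol}\;\leq\;0 .
\]
This approximation step is the only point needing care; the rest is algebraic.

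\emph{Strict monotonicity.} For $\xi\neq\zeta$ one has $\langle\mathcal{A}(\xi)-\mathcal{A}(\zeta),\xi-\zeta\rangle>0$. This follows from
\[
\langle\mathcal{A}(\xi)-\mathcal{A}(\zeta),\xi-\zeta\rangle\;\geq\;\bigl(\lvert\xi\rvert^{p-1}-\lvert\zeta\rvert^{p-1}\bigr)\bigl(\lvert\xi\rvert-\lvert\zeta\rvert\bigr)\;\geq\;0,
\]
which uses Cauchy--Schwarz. Equality forces $\lvert\xi\rvert=\lvert\zeta\rvert$ and equality in Cauchy--Schwarz, hence $\xi=\zeta$.

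\emph{Conclusion.} The integrand above is nonnegative and its integral is $\leq 0$, so it vanishes almost everywhere. By strict monotonicity, $\nabla w_\varepsilon=0$ a.e. in $\Omega$. Since $w_\varepsilon$ has compact support, each connected component of $\Omega$ contains points where $w_\varepsilon=0$. A $W^{1,p}$ function with zero gradient is constant on each component, so $w_\varepsilon\equiv0$, i.e. $a\leq v+\varepsilon$ in $\Omega$. Letting $\varepsilon\to0$ gives $a\leq v$ in $\Omega$. Together with the boundary hypothesis and continuity, this gives $a\leq v$ on $\overline{\Omega}$.
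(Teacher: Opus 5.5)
Your proposal is correct and is essentially the paper's argument: test the weak inequality with the positive part of $a-v$, and use strict monotonicity of $\xi\mapsto\lvert\xi\rvert^{p-2}\xi$ to force its gradient, and hence the function itself, to vanish. Your $\varepsilon$-shift and density step make explicit the admissibility of the test function, which the paper's one-line proof takes for granted; like the paper, your final step tacitly uses that $M$ is connected and $\Omega\neq M$, so that no component of $\Omega$ is closed in $M$ (for $\Omega=M$ compact the statement fails, e.g.\ $a=v+1$).
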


\begin{proof}
The argument is standard. Testing with $\eta=(a-v)_{+}$ and using the monotonicity
inequality
\[
\bigl\langle\lvert\xi\rvert^{p-2}\xi-\lvert\zeta\rvert^{p-2}\zeta,\,\xi-\zeta\bigr\rangle
\,\geq\,0
\qquad\text{for all }\xi,\zeta\in T_{x}M
\]
yields $\nabla(a-v)_{+}=0$, hence $(a-v)_{+}\equiv 0$.
\end{proof}

Interior and boundary regularity for the $p$-Laplace equation will be used to justify
pointwise computations and to read off normal derivatives along $\bdry$.

\begin{proposition}[\cite{Tolksdorf1984}, \cite{Lieberman1988}]\label{prop:regularity}
Let $1<p<\infty$ and let $v$ be $p$-harmonic on an open set $\Omega\subset M$.
\begin{enumerate}
\item[\textup{(i)}]
On every compact subset of $\Omega$, the function $v$ is of class $C^{1,\alpha}$ for some
$\alpha\in(0,1)$ depending on $n$, $p$, and the local geometry
\cite{Tolksdorf1984}. On the open set $\{\nabla v\neq 0\}$ the equation is uniformly
elliptic, and $v$ is smooth there by elliptic bootstrapping.
\item[\textup{(ii)}]
If $\Omega$ meets $\bdry$ and $v$ is constant on $\bdry\cap\overline{\Omega}$, then $v$
is of class $C^{1,\alpha}$ up to $\bdry\cap\overline{\Omega}$
\cite{Lieberman1988}. In particular, the normal derivative $v_{\rho}$ exists and is
continuous along $\bdry$.
\end{enumerate}
\end{proposition}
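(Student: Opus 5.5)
The plan is to reduce both parts to the Euclidean results of Tolksdorf and Lieberman by working in local coordinates and checking their structural hypotheses. Fix a point $x_{0}\in\Omega$, or $x_{0}\in\bdry\cap\overline{\Omega}$ for part (ii). Choose a coordinate chart $\psi:U\to\mathbb{R}^{n}$ around $x_{0}$ with $\overline{U}$ compact. In the chart the weak formulation of $\pLap v=0$ reads
\[
\int \bigl\langle \mathcal{A}(x,Dv),D\eta\bigr\rangle\,dx=0,
\qquad
\mathcal{A}^{i}(x,\xi)=\sqrt{\det g}\,\bigl(g^{kl}\xi_{k}\xi_{l}\bigr)^{\frac{p-2}{2}}g^{ij}\xi_{j}.
\]
The first step is to verify the structure conditions of \cite{Tolksdorf1984} and \cite{Lieberman1988} for $\mathcal{A}$ on $\overline{U}$. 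These are the growth bound $\lvert\mathcal{A}(x,\xi)\rvert\leq \Lambda\lvert\xi\rvert^{p-1}$, the ellipticity bound $\partial_{\xi}\mathcal{A}(x,\xi)\zeta\cdot\zeta\geq\lambda\lvert\xi\rvert^{p-2}\lvert\zeta\rvert^{2}$, and the Hölder (here Lipschitz) dependence $\lvert\mathcal{A}(x,\xi)-\mathcal{A}(y,\xi)\rvert\leq\Lambda\lvert x-y\rvert\,\lvert\xi\rvert^{p-1}$. They follow from uniform equivalence of $g$ with the Euclidean metric on $\overline{U}$ and smoothness of $g$. The constants depend on $n$, $p$ and bounds for $g$ and $\partial g$ on $U$; this is the "local geometry" dependence of $\alpha$.

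For (i), Tolksdorf's theorem then gives $v\in C^{1,\alpha}$ on compact subsets of $\psi(U\cap\Omega)$, and a finite cover of a compact $K\subset\Omega$ gives the claim on $K$. On the open set $\{\nabla v\neq 0\}$, I would argue on a small ball where $\lvert\nabla v\rvert\geq c>0$ and $v\in C^{1,\alpha}$. There $v$ is a weak solution of an equation whose coefficients $\partial_{\xi}\mathcal{A}(x,\nabla v)$ are $C^{\alpha}$ and uniformly elliptic with constants controlled by $c$ and $\sup\lvert\nabla v\rvert$. Difference quotients show $v\in W^{2,2}$, and the differentiated equation for $\partial_{k}v$ is linear, uniformly elliptic, with $C^{\alpha}$ coefficients. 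Schauder theory gives $v\in C^{2,\alpha}$, and iterating yields $C^{\infty}$.

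For (ii), I would use Fermi coordinates $(y,\rho)$ near $\bdry$, which are smooth for $\rho$ small since $\bdry$ is compact and smooth, to flatten $\bdry\cap U$ to a piece of $\{x_{n}=0\}$. The Dirichlet datum is a constant, which is smooth, so Lieberman's boundary theorem for divergence-form equations with the above structure applies. The output is $v\in C^{1,\alpha}$ up to the flat boundary, and hence up to $\bdry\cap\overline{\Omega}$ after pulling back. Continuity of $\nabla v$ up to the boundary then makes $v_{\rho}=\langle\nabla v,\innerN\rangle$ well defined and continuous along $\bdry$.

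The main obstacle I expect is in (ii): matching the precise hypotheses of \cite{Lieberman1988}. That theorem requires the boundary datum to be attained in a suitable sense, namely $v\in W^{1,p}$ near the boundary with trace equal to the constant, or $v$ continuous up to $\bdry$. It also requires an a priori $L^{\infty}$ bound on $v$ near the boundary. I would take $v\in W^{1,p}_{\mathrm{loc}}$ up to the boundary with continuous extension as part of the meaning of ``constant on $\bdry$''. Local boundedness then comes from De Giorgi/Moser-type estimates for $v$ minus the boundary constant, extended by zero across the flat boundary.
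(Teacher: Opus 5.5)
Your proposal is correct and follows the paper's route. The paper gives no argument beyond citing Tolksdorf for interior $C^{1,\alpha}$ regularity and Lieberman for $C^{1,\alpha}$ regularity up to a boundary with constant Dirichlet data, and your chart reduction, check of the structure conditions, Schauder bootstrap on $\{\nabla v\neq 0\}$, and flattening of $\bdry$ are exactly the routine steps that citation takes for granted; the side hypotheses you flag (local boundedness, the sense in which the boundary value is attained, and, implicitly, that $\Omega$ contains a full one-sided neighborhood in $M$ of each boundary point treated, as $\tube{R}$ does) are likewise left implicit in the paper.
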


\vspace{.2in}

\section{Auxiliary estimates}\label{sec:lemmas}

Let $v$ be as in Theorem~\ref{thm:main}. Set
\begin{equation}\label{eq:u-f}
u\,:=\,-(p-1)\log v, \qquad f\,:=\,\lvert\nabla u\rvert^{2},
\end{equation}
so that
\[
\frac{\lvert\nabla v\rvert}{v}\,=\,\frac{\sqrt{f}}{p-1}.
\]
The function $u$ is a weak solution of the normalized equation associated with the
$p$-Laplacian. Three estimates for $v$ and $f$ are proved below, corresponding to the
boundary itself, an interior critical point of $f$, and the region at a definite
distance from the boundary.

\subsection{A boundary estimate via a radial barrier}

\begin{lemma}\label{lem:boundary}
Along $\bdry$,
\begin{equation}\label{eq:boundary-bound}
\frac{\lvert\nabla v\rvert}{v}
\,\leq\,
\frac{2}{R}+\frac{n-1}{p-1}\sqrt{K}.
\end{equation}
\end{lemma}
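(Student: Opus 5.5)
The plan is to compare $v$ from below with a radial $p$-subharmonic barrier that agrees with $v$ on $\bdry$, and then read off the normal derivative. Let $c>0$ be the constant value of $v$ on $\bdry$. Since $v$ is constant along $\bdry$, its tangential gradient vanishes there. By Proposition~\ref{prop:regularity}(ii), $\lvert\nabla v\rvert=\lvert v_{\rho}\rvert$ along $\bdry$. The sign condition $v_{\outnu}\geq 0$ gives $v_{\rho}\leq 0$, so $\lvert\nabla v\rvert/v=-v_{\rho}/c$ on $\bdry$. It therefore suffices to bound $v_{\rho}$ from below, and for that I need a function $a\leq v$ on $\tube{R}$ with $a=c$ on $\bdry$.

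\emph{Construction of the barrier.} Set $\mu=(n-1)\sqrt{K}$ and $\lambda:=\mu/(p-1)$. Take $a:=\varphi(\rho)$ with $\varphi'=-\psi<0$, and choose
\[
\psi(t)=A^{1/(p-1)}e^{-\lambda t},\qquad \varphi(t)=c-\int_{0}^{t}\psi(s)\,ds .
\]
Then $\varphi(0)=c$. The constant $A$ is fixed so that $\varphi(R)=0$, which gives
\[
A^{1/(p-1)}=\frac{c\lambda}{1-e^{-\lambda R}},
\]
interpreted as $A^{1/(p-1)}=c/R$ when $K=0$. Formally, using $\lvert\nabla\rho\rvert=1$,
\[
\pLap a=-(\psi^{p-1})'(\rho)-\psi^{p-1}(\rho)\,\Delta\rho .
\]
Here $\psi^{p-1}=Ae^{-\mu t}$, so $-(\psi^{p-1})'=\mu\psi^{p-1}$. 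Hence $\pLap a=\psi^{p-1}(\rho)\bigl(\mu-\Delta\rho\bigr)\geq 0$ by Corollary~\ref{cor:lap-comp-tube}.

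\emph{Main obstacle: the weak inequality across $\Cut(\bdry)$.} This is the step I expect to be the real difficulty. The vector field $\lvert\nabla a\rvert^{p-2}\nabla a$ equals $-\Psi(\rho)\nabla\rho$ with $\Psi:=\psi^{p-1}$ smooth and positive. For a nonnegative test function $\eta$, I would write
\[
\int\Psi(\rho)\langle\nabla\rho,\nabla\eta\rangle=\int\langle\nabla\rho,\nabla(\Psi(\rho)\eta)\rangle-\int\Psi'(\rho)\,\eta .
\]
The first term is at least $-\mu\int\Psi(\rho)\eta$ by the distributional inequality $\Delta\rho\leq\mu$ on $\tube{R}$ in Corollary~\ref{cor:lap-comp-tube}. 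This uses that $\Psi(\rho)\eta$ is a nonnegative Lipschitz function with compact support, so an approximation argument lets it serve as a test function. The second term equals $+\mu\int\Psi(\rho)\eta$. Together these give $-\int\langle\lvert\nabla a\rvert^{p-2}\nabla a,\nabla\eta\rangle\geq 0$, which is weak $p$-subharmonicity of $a$. So only the gradient and the (distributional) Laplacian of $\rho$ enter, never its Hessian.

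\emph{Comparison and conclusion.} I would apply Proposition~\ref{prop:weak-comparison} on $\Omega=\tube{R}$, or on $\tube{R'}$ with $R'<R$ followed by $R'\uparrow R$ to secure continuity up to the boundary. The boundary data are compatible: on $\bdry$ we have $a=c=v$, and on $\{\rho=R\}$ we have $a=0<v$. The test function $(a-v)_{+}$ vanishes on all of $\partial\Omega$, so the proof of that proposition applies verbatim, and we obtain $a\leq v$ in $\Omega$. Since $a=v$ on $\bdry$, the inward difference quotients give $v_{\rho}\geq a_{\rho}=\varphi'(0)=-A^{1/(p-1)}$. Therefore
\[
\frac{\lvert\nabla v\rvert}{v}\leq\frac{\lambda}{1-e^{-\lambda R}}\leq\frac{1}{R}+\lambda\leq\frac{2}{R}+\frac{n-1}{p-1}\sqrt{K}.
\]
The middle inequality is the elementary bound $x/(1-e^{-x})\leq 1+x$ applied with $x=\lambda R$.
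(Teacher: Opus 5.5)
Your proposal is correct and follows essentially the same route as the paper: an exponential radial barrier solving $(p-1)\varphi''+\mu\varphi'=0$, weak $p$-subharmonicity across $\Cut(\bdry)$ obtained by testing the distributional inequality $\Delta\rho\leq\mu$ against $\Psi(\rho)\eta$, the weak comparison principle, a one-sided difference quotient at $\bdry$, and the bound $x/(1-e^{-x})\leq 1+x$. The differences are cosmetic. The paper compares on $\{0<\rho<R/2\}$ with outer value $\min\{\min_{\{\rho=R/2\}}v,\,v_{0}\}$, whereas you use outer value $0$ on $\{\rho=R'\}$; for this the barrier must be rebuilt with $\varphi(R')=0$ for each $R'<R$ rather than kept fixed, and it is why you obtain the slightly sharper $1/R$ in place of $2/R$.
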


\begin{proof}
Write $v_{0}:=v|_{\bdry}>0$. The assumptions $v_{\outnu}\geq 0$ and $\outnu=-\innerN$ give
\begin{equation}\label{eq:v-rho-sign}
v_{\rho}\,=\,-v_{\outnu}\,\leq\,0
\qquad\text{on }\bdry.
\end{equation}

\medskip
\noindent\textit{Step 1: construction of a radial barrier.}
Let $\delta:=R/2$ and $\Sigma_{\delta}:=\{\rho=\delta\}$. The set $\Sigma_{\delta}$ is a
closed subset of the compact tube $\{\rho\leq\delta\}$, hence compact. Since $v$ is
continuous and positive there,
\[
m_{0}\,:=\,\min_{\Sigma_{\delta}}v\,>\,0.
\]
(If $\Sigma_{\delta}$ is empty, replace $m_{0}$ by $v_{0}$ in what follows; the
construction is unchanged.) Set
\[
\widetilde{m}\,:=\,\min\{m_{0},v_{0}\}, \qquad \mu\,:=\,(n-1)\sqrt{K}.
\]
Let $\psi\in C^{2}([0,\delta])$ satisfy
\begin{equation}\label{eq:barrier-ODE}
(p-1)\psi''+\mu\psi'\,=\,0,
\qquad
\psi(0)=v_{0},
\qquad
\psi(\delta)=\widetilde{m}.
\end{equation}
If $\mu>0$, the equation reads $\psi''/\psi'=-\mu/(p-1)$. Integrating once gives
$\psi'(\rho)=-\alpha\,e^{-\mu\rho/(p-1)}$ for a constant $\alpha$, and a further
integration together with $\psi(0)=v_{0}$ yields
\[
\psi(\rho)
\,=\,
v_{0}-\frac{\alpha(p-1)}{\mu}\Bigl(1-e^{-\mu\rho/(p-1)}\Bigr).
\]
The condition $\psi(\delta)=\widetilde{m}$ determines
\[
\alpha
\,=\,
\frac{(v_{0}-\widetilde{m})\mu}{(p-1)\bigl(1-e^{-\mu\delta/(p-1)}\bigr)}
\,\geq\,0,
\]
where the sign follows from $\widetilde{m}\leq v_{0}$. If $\mu=0$, then $\psi''=0$, so
$\psi(\rho)=v_{0}-\alpha\rho$ with $\alpha=(v_{0}-\widetilde{m})/\delta\geq 0$. In both
cases $\psi'\leq 0$.

\medskip
\noindent\textit{Step 2: the composition $\psi\circ\rho$ is $p$-subharmonic.}
Wherever $\rho$ is smooth, the identity $\lvert\nabla\rho\rvert=1$ gives
\[
\pLap(\psi\circ\rho)
\,=\,
\dv\bigl(\lvert\psi'\rvert^{p-2}\psi'\,\nabla\rho\bigr)
\,=\,
\bigl(\lvert\psi'\rvert^{p-2}\psi'\bigr)'
+
\lvert\psi'\rvert^{p-2}\psi'\,\Delta\rho.
\]
Set $b(\rho):=\lvert\psi'\rvert^{p-2}\psi'$. Then
$b(\rho)=-\alpha^{p-1}e^{-\mu\rho}$ (and $b\equiv-\alpha^{p-1}$ if $\mu=0$), hence
\[
b'(\rho)\,=\,\mu\alpha^{p-1}e^{-\mu\rho}\,=\,\mu\lvert b(\rho)\rvert.
\]
Consequently, at points where $\rho$ is smooth,
\begin{equation}\label{eq:p-subharmonic-smooth}
\pLap(\psi\circ\rho)
\,=\,
\mu\lvert b\rvert+b\,\Delta\rho
\,=\,
\lvert b\rvert\bigl(\mu-\Delta\rho\bigr)
\,\geq\,0,
\end{equation}
where the last inequality uses the pointwise part of Corollary~\ref{cor:lap-comp-tube}
(and holds equally when $\mu=0$). 

Across the cut locus one still has
\(\nabla(\psi\circ\rho)=\psi'\nabla\rho\) almost everywhere. Thus for every
\(\eta\in C_c^\infty(\Omega_\delta)\) with \(\eta\ge 0\),
\[
\int_{\Omega_\delta}
\bigl\langle|\nabla(\psi\circ\rho)|^{p-2}\nabla(\psi\circ\rho),\nabla\eta\bigr\rangle
=\int_{\Omega_\delta}b(\rho)\,\langle\nabla\rho,\nabla\eta\rangle,
\]
where \(\Omega_\delta:=\{0<\rho<\delta\}\). Let \(\varphi:=-b(\rho)\eta\ge 0\).
Approximating \(\varphi\) by nonnegative smooth test functions and using
\(\Delta\rho\le\mu\) in the distributional sense ,
one has \(-\int\langle\nabla\rho,\nabla\varphi\rangle\le\mu\int\varphi\).
Since \(\nabla\varphi=-b'(\rho)\eta\nabla\rho-b(\rho)\nabla\eta\) and
\(|\nabla\rho|=1\) a.e., this rearranges to
\[
\int_{\Omega_\delta}b(\rho)\,\langle\nabla\rho,\nabla\eta\rangle
\le\int_{\Omega_\delta}\bigl(\mu|b|-b'\bigr)\eta=0.
\]
Hence \(\Delta_p(\psi\circ\rho)\ge 0\) on \(\Omega_\delta\) in the weak sense.

\medskip
\noindent\textit{Step 3: application of the weak comparison principle.}
The domain $\Omega_{\delta}$ is bounded. The function $a:=\psi\circ\rho$ is Lipschitz,
since $\rho$ is $1$-Lipschitz and $\psi$ is smooth, while $v$ is continuous on the
compact set $\overline{\Omega_{\delta}}$ and of class $C^{1,\alpha}$ in the interior by
Proposition~\ref{prop:regularity}\textup{(i)}. Thus
$a,v\in W^{1,p}(\Omega_{\delta})\cap C(\overline{\Omega_{\delta}})$. Step~2 gives
$-\pLap a\leq 0=-\pLap v$. The boundary of $\Omega_{\delta}$ consists of $\bdry$ and
$\Sigma_{\delta}$: on $\bdry$ one has $a=\psi(0)=v_{0}=v$, while on $\Sigma_{\delta}$ one
has $a=\psi(\delta)=\widetilde{m}\leq m_{0}\leq v$. Proposition~\ref{prop:weak-comparison}
therefore yields
\begin{equation}\label{eq:barrier-below}
\psi\circ\rho\,\leq\,v
\qquad\text{on }\overline{\Omega_{\delta}}.
\end{equation}

\medskip
\noindent\textit{Step 4: reading the normal derivative.}
Since $\bdry$ is smooth and the boundary data are constant, Proposition~\ref{prop:regularity}
ensures that $v$ is of class $C^{1}$ up to $\bdry$, so $v_{\rho}$ exists along $\bdry$.
For $x\in\bdry$, let $\gamma(t)=\exp_{x}(t\innerN)$ be the inward unit-speed geodesic.
For all sufficiently small $t>0$ one has $\rho(\gamma(t))=t$. Inequality
\eqref{eq:barrier-below} then gives
\[
\frac{v(\gamma(t))-v_{0}}{t}
\,\geq\,
\frac{\psi(t)-\psi(0)}{t},
\]
and sending $t\downarrow 0$ gives $v_{\rho}(x)\geq\psi'(0)=-\alpha$. Combined with
\eqref{eq:v-rho-sign}, this yields $0\geq v_{\rho}\geq-\alpha$. Since $v$ is constant on
$\bdry$, its tangential gradient vanishes, and therefore
\[
\lvert\nabla v\rvert\,=\,\lvert v_{\rho}\rvert\,\leq\,\alpha
\qquad\text{on }\bdry.
\]

\medskip
\noindent\textit{Step 5: the numerical bound.}
If $\mu>0$, set $\tau:=\mu\delta/(p-1)>0$. Then
\begin{align*}
\frac{\alpha}{v_{0}}
&\,=\,
\Bigl(1-\frac{\widetilde{m}}{v_{0}}\Bigr)
\frac{\mu}{(p-1)\bigl(1-e^{-\tau}\bigr)}
\,\leq\,
\frac{\mu}{(p-1)\bigl(1-e^{-\tau}\bigr)}
\,=\,
\frac{1}{\delta}\cdot\frac{\tau}{1-e^{-\tau}}.
\end{align*}
The elementary inequality $e^{\tau}\geq 1+\tau$ for $\tau>0$ rearranges to
$(1+\tau)(1-e^{-\tau})\geq\tau$, or equivalently
$\tau/(1-e^{-\tau})\leq 1+\tau$. Hence
\[
\frac{\alpha}{v_{0}}
\,\leq\,
\frac{1+\tau}{\delta}
\,=\,
\frac{1}{\delta}+\frac{\mu}{p-1}
\,=\,
\frac{2}{R}+\frac{n-1}{p-1}\sqrt{K}.
\]
If $\mu=0$, then $\alpha/v_{0}\leq 1/\delta=2/R$, and the same formula holds. This proves
\eqref{eq:boundary-bound}.
\end{proof}

\subsection{An interior maximum principle for the energy density}

The following lemma is a self-contained Bochner computation at a critical point of $f$.

\begin{lemma}\label{lem:critical}
Let $x_{0}$ be an interior point of $M$ at which $f(x_{0})>0$ and $f$ attains a local
maximum. Then
\[
f(x_{0})\,\leq\,(n-1)^{2}K.
\]
\end{lemma}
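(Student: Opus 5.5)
The plan is to derive the nonlinear equation satisfied by $u=-(p-1)\log v$, evaluate a Bochner-type identity for the linearized operator at $x_{0}$, and use the first- and second-order conditions of the maximum.

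\emph{The equation for $u$.} From $v=e^{-u/(p-1)}$ we get $\nabla v=-\frac{v}{p-1}\nabla u$. Hence $\lvert\nabla v\rvert^{p-2}\nabla v=-c\,v^{p-1}\lvert\nabla u\rvert^{p-2}\nabla u$ with $c=(p-1)^{1-p}$. Using $\nabla(v^{p-1})=-v^{p-1}\nabla u$, the equation $\pLap v=0$ becomes
\[
\dv\bigl(f^{(p-2)/2}\nabla u\bigr)\,=\,f^{p/2}.
\]
Since $f(x_{0})>0$, we have $\nabla v\neq 0$ near $x_{0}$, and Proposition~\ref{prop:regularity}(i) makes $u$ smooth there. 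All computations below are therefore classical. Expanding the divergence and dividing by $f^{(p-2)/2}$ gives the nondivergence form
\[
\Delta u+\frac{p-2}{2f}\langle\nabla f,\nabla u\rangle\,=\,f .
\]
I will use this near $x_{0}$.

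\emph{Consequences of the maximum.} At $x_{0}$ we have $\nabla f=0$, i.e.\ $\operatorname{Hess}u(\nabla u,\cdot)=0$, and so $\Delta u(x_{0})=f(x_{0})$. The matrix $A=I+(p-2)\frac{\nabla u\otimes\nabla u}{f}$ has eigenvalues $1$ and $p-1$, so it is positive definite because $p>1$. Since $\operatorname{Hess}f(x_{0})\leq 0$, this gives
\[
\Delta f+(p-2)\frac{\operatorname{Hess}f(\nabla u,\nabla u)}{f}\,\leq\,0 \qquad\text{at }x_{0}.
\]

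\emph{Bochner and the differentiated equation.} The Bochner formula reads $\tfrac12\Delta f=\lvert\operatorname{Hess}u\rvert^{2}+\langle\nabla u,\nabla\Delta u\rangle+\Ric(\nabla u,\nabla u)$. I differentiate the nondivergence equation in the direction $\nabla u$ and evaluate at $x_{0}$. Every term containing an undifferentiated factor $\nabla f$ vanishes there, so
\[
\langle\nabla u,\nabla\Delta u\rangle\,=\,-\frac{p-2}{2f}\operatorname{Hess}f(\nabla u,\nabla u).
\]
Substituting into half of the inequality above, the Hessian-of-$f$ terms cancel exactly. What remains is
\[
0\,\geq\,\lvert\operatorname{Hess}u\rvert^{2}+\Ric(\nabla u,\nabla u)\,\geq\,\lvert\operatorname{Hess}u\rvert^{2}-(n-1)Kf .
\]

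\emph{Refined Kato step.} Let $e_{1}=\nabla u/\lvert\nabla u\rvert$. Because $\operatorname{Hess}u(e_{1},\cdot)=0$ at $x_{0}$, the Hessian is supported on the $(n-1)$-dimensional complement of $e_{1}$. Cauchy--Schwarz on its trace there gives
\[
\lvert\operatorname{Hess}u\rvert^{2}\,\geq\,\frac{(\Delta u)^{2}}{n-1}\,=\,\frac{f^{2}}{n-1}.
\]
Therefore $f^{2}/(n-1)\leq(n-1)Kf$, and dividing by $f(x_{0})>0$ yields $f(x_{0})\leq(n-1)^{2}K$.

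The main point requiring care is the bookkeeping in the differentiated equation. One must check that the third-order terms in $u$ appear precisely as $-\frac{p-2}{2f}\operatorname{Hess}f(\nabla u,\nabla u)$, so that they cancel against the $A$-weighted Hessian of $f$. One must also confirm that the leftover terms, namely $\langle\nabla f,\nabla u\rangle$ and $\operatorname{Hess}u(\nabla u,\nabla f)$, all carry a factor $\nabla f$ and therefore drop out at $x_{0}$. Everything else is standard.
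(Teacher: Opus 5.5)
Your proof is correct and takes the same route as the paper. You use the same equation $\dv(f^{p/2-1}\nabla u)=f^{p/2}$ for $u$ and differentiate it along $\nabla u$, so that the $\nablasq f(\nabla u,\nabla u)$ terms cancel against the $A$-weighted trace of $\nablasq f$, and you apply the same Cauchy--Schwarz bound on the orthogonal complement of $\nabla u$; the only cosmetic difference is that the paper wraps $A:\nablasq f$ in a divergence-form operator $L$ (whose extra terms vanish at $x_{0}$), while you use $\operatorname{tr}(A\,\nablasq f)\leq 0$ directly.
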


\begin{proof}
Since $f(x_{0})>0$, one has $\nabla u\neq 0$ near $x_{0}$, so the $p$-Laplace equation
is uniformly elliptic in a neighborhood of $x_{0}$. Proposition~\ref{prop:regularity}\textup{(i)}
together with Schauder bootstrapping implies that $u$ is smooth near $x_{0}$, and the
computations below are pointwise.

\medskip
\noindent\textit{Step 1: the equation satisfied by $u$.}
From $\nabla u=-(p-1)\nabla v/v$ it follows that
\[
\lvert\nabla u\rvert^{p-2}\nabla u
\,=\,
-(p-1)^{p-1}\frac{\lvert\nabla v\rvert^{p-2}\nabla v}{v^{p-1}}.
\]
Taking the divergence and using $\dv(\lvert\nabla v\rvert^{p-2}\nabla v)=0$ yields
\begin{align*}
\dv\bigl(f^{p/2-1}\nabla u\bigr)
&\,=\,
-(p-1)^{p-1}
\bigl\langle\lvert\nabla v\rvert^{p-2}\nabla v,\,\nabla(v^{-(p-1)})\bigr\rangle
\\
&\,=\,
(p-1)^{p}\frac{\lvert\nabla v\rvert^{p}}{v^{p}}
\,=\,
f^{p/2}.
\end{align*}
Expanding the left-hand side yields, on the set $\{f>0\}$,
\begin{equation}\label{eq:Delta-u}
\Delta u
\,=\,
f-\Bigl(\frac{p}{2}-1\Bigr)f^{-1}\langle\nabla f,\nabla u\rangle.
\end{equation}
The Bochner formula for $u$ reads
\[
\frac12\Delta f
\,=\,
\lvert\nablasq u\rvert^{2}+\Ric(\nabla u,\nabla u)+\langle\nabla\Delta u,\nabla u\rangle.
\]

\medskip
\noindent\textit{Step 2: simplifications at the critical point.} 
At $x_{0}$ choose an orthonormal frame with $e_{1}=\nabla u/\sqrt{f}$, so that
$u_{j}=\sqrt{f}\,\delta_{1j}$. Since $\nabla f(x_{0})=0$,
\[
0\,=\,f_{i}\,=\,2\sum_{j}u_{ji}u_{j}\,=\,2\sqrt{f}\,u_{1i}
\qquad(i=1,\dots,n),
\]
hence $u_{1i}=0$ for all $i$, and in particular $u_{11}=0$. Equation~\eqref{eq:Delta-u}
at $x_{0}$, where $\langle\nabla f,\nabla u\rangle=0$, reduces to $\Delta u=f$, and
therefore
\[
\sum_{i=2}^{n}u_{ii}\,=\,\Delta u-u_{11}\,=\,f.
\]
Cauchy--Schwarz then gives
\begin{equation}\label{eq:hessian-CS}
\lvert\nablasq u\rvert^{2}
\,\geq\,
\sum_{i=2}^{n}u_{ii}^{2}
\,\geq\,
\frac{1}{n-1}\Biggl(\sum_{i=2}^{n}u_{ii}\Biggr)^{2}
\,=\,
\frac{f^{2}}{n-1}
\qquad\text{at }x_{0}.
\end{equation}

\medskip
\noindent\textit{Step 3: cancellation of the Hessian of $f$.}
Differentiating~\eqref{eq:Delta-u} and pairing with $\nabla u$ at $x_{0}$ (several terms
drop because $\nabla f=0$) gives
\[
\langle\nabla\Delta u,\nabla u\rangle
\,=\,
-\Bigl(\frac{p}{2}-1\Bigr)f^{-1}\bigl\langle\nabla\langle\nabla f,\nabla u\rangle,\nabla u\bigr\rangle
\,=\,
-\Bigl(\frac{p}{2}-1\Bigr)f^{-1}\nablasq f(\nabla u,\nabla u).
\]
The last step uses the product rule
\[
\nabla u\bigl\langle\nabla f,\nabla u\bigr\rangle
\,=\,
\nablasq f(\nabla u,\nabla u)+\bigl\langle\nabla f,\,\nabla_{\nabla u}\nabla u\bigr\rangle,
\]
whose second summand vanishes at $x_{0}$. Substituting into the Bochner formula gives
at $x_{0}$
\[
\Delta f
\,=\,
2\lvert\nablasq u\rvert^{2}+2\Ric(\nabla u,\nabla u)
-(p-2)f^{-1}\nablasq f(\nabla u,\nabla u).
\]

\medskip
\noindent\textit{Step 4: the linearized operator.}
On $\{f>0\}$ define the bundle endomorphism
\[
A\,:=\,I+(p-2)\frac{\nabla u\otimes\nabla u}{f}
\]
and the operator
\[
L\zeta
\,:=\,
\dv\bigl(f^{p/2-1}A\nabla\zeta\bigr)
-p\,f^{p/2-1}\langle\nabla u,\nabla\zeta\rangle.
\]
At $x_{0}$ the drift term vanishes, and
\[
L(f)
\,=\,
f^{p/2-1}\,A:\nablasq f
\,=\,
f^{p/2-1}
\Bigl(
\Delta f+(p-2)f^{-1}\nablasq f(\nabla u,\nabla u)
\Bigr).
\]
Inserting the expression for $\Delta f$ from Step~4, the two terms involving
$\nablasq f(\nabla u,\nabla u)$ cancel, and
\[
L(f)(x_{0})
\,=\,
2f^{p/2-1}\bigl(\lvert\nablasq u\rvert^{2}+\Ric(\nabla u,\nabla u)\bigr).
\]
Using~\eqref{eq:hessian-CS} and $\Ric(\nabla u,\nabla u)\geq-(n-1)Kf$, one finds
\[
L(f)(x_{0})
\,\geq\,
2f^{p/2-1}\Biggl(\frac{f^{2}}{n-1}-(n-1)Kf\Biggr).
\]

\medskip
\noindent\textit{Step 5: the maximum principle.}
Since $f$ attains a local maximum at $x_{0}$, one has $\nablasq f(x_{0})\leq 0$. The
endomorphism $A$ is symmetric and positive definite (its eigenvalues are $p-1>0$ in the
direction of $\nabla u$ and $1$ in the orthogonal complement), hence
\[
L(f)(x_{0})\,=\,f^{p/2-1}A_{ij}f_{ij}\,\leq\,0.
\]
Combining the two inequalities for $L(f)(x_{0})$ and dividing by $2f^{p/2}(x_{0})>0$
gives
\[
\frac{f}{n-1}\,\leq\,(n-1)K,
\]
that is, $f(x_{0})\leq(n-1)^{2}K$.
\end{proof}

\subsection{An interior estimate away from the boundary}

\begin{lemma}\label{lem:interior}
Let $x\in\tube{R}$ satisfy $R/4\leq\rho(x)\leq R/2$. Then
\[
\frac{\lvert\nabla v\rvert}{v}(x)
\,\leq\,
C_{n,p}\Bigl(\frac{1}{R}+\sqrt{K}\Bigr),
\]
for some constant $C_{n,p}$ depending only on $n$ and $p$.
\end{lemma}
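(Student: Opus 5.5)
The plan is to apply Theorem~\ref{thm:WZ} on a geodesic ball centered at $x$ that stays inside the interior of $\tube{R}$. Since $\rho(x)\geq R/4$, set $r:=R/8$ and consider $B(x,r)$. I will check three things about this ball: that it avoids $\bdry$, that it lies in $\tube{R}$, and that it is a genuine geodesic ball of the type used in \cite{WangZhang2011}.

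For $y\in B(x,r)$, the $1$-Lipschitz property of $\rho$ gives
\[
\rho(y)\,\geq\,\rho(x)-r\,\geq\,\tfrac{R}{4}-\tfrac{R}{8}\,=\,\tfrac{R}{8}\,>\,0,
\qquad
\rho(y)\,\leq\,\rho(x)+r\,\leq\,\tfrac{R}{2}+\tfrac{R}{8}\,<\,R .
\]
Hence $B(x,r)\subset\tube{R}\setminus\bdry$, and $v$ is a positive $p$-harmonic function on this ball.

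The point needing care is that Theorem~\ref{thm:WZ} is stated for complete manifolds without boundary, whereas $M$ has boundary. I would argue that the proof in \cite{WangZhang2011} is local. It uses the Ricci lower bound only on the ball, the Laplacian comparison for $d(x,\cdot)$ on $B(x,r)$ (or volume comparison and the local Sobolev inequality of Saloff-Coste), and cutoff functions supported in $B(x,r)$.

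Because the ball stays at distance at least $R/8$ from $\bdry$, every point of $B(x,r)$ is joined to $x$ by a minimizing geodesic lying in the interior. Indeed, a minimizing curve from $x$ to $y$ has length less than $r$, so it remains in $B(x,r)$ and never touches $\bdry$; it is therefore a geodesic of the interior metric. This means the interior comparison geometry on $B(x,r)$ is the same as for a boundaryless manifold: Laplacian comparison for $d(x,\cdot)$ and Bishop–Gromov for balls centered in $B(x,r/2)$ hold verbatim. If one prefers a black-box statement, an alternative is to note that $B(x,r)$ is isometric to a ball in a manifold in which one may regard the computation as taking place, since all the estimates depend only on data inside the ball.

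This justification is the main obstacle. Everything else is bookkeeping, and I would spend the proof on making the locality of Theorem~\ref{thm:WZ} explicit in this way.

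Granting this, Theorem~\ref{thm:WZ} yields
\[
\frac{\lvert\nabla v\rvert}{v}(x)\,\leq\,\sup_{B(x,r/2)}\frac{\lvert\nabla v\rvert}{v}\,\leq\,C_{n,p}\,\frac{1+\sqrt{K}\,r}{r}\,=\,C_{n,p}\Bigl(\frac{8}{R}+\sqrt{K}\Bigr).
\]
Enlarging the constant to $8C_{n,p}$ gives the claim, with a constant depending only on $n$ and $p$.
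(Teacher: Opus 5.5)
Your proposal is correct and follows the paper's proof essentially line for line: the same ball $B(x,R/8)$, the same two-sided Lipschitz bound placing it in $\{R/8<\rho<5R/8\}\subset\tube{R}\setminus\bdry$, and the same final constant $8C_{n,p}$. The only difference is that the paper invokes Theorem~\ref{thm:WZ} on this ball without comment, whereas your locality paragraph supplies a justification the paper leaves implicit: minimizing paths from $x$ to points of $B(x,R/8)$ stay in the interior, so the comparison geometry used by Wang--Zhang is unaffected by $\bdry$.
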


\begin{proof}
Apply Theorem~\ref{thm:WZ} on the geodesic ball $B(x,R/8)$. For every
$y\in B(x,R/8)$, the triangle inequality and the fact that $\rho$ is $1$-Lipschitz give
\begin{align*}
\rho(y)&\,\geq\,\rho(x)-d(x,y)\,>\,\frac{R}{4}-\frac{R}{8}\,=\,\frac{R}{8}\,>\,0,\\
\rho(y)&\,\leq\,\rho(x)+d(x,y)\,<\,\frac{R}{2}+\frac{R}{8}\,=\,\frac{5R}{8}\,<\,R.
\end{align*}
Hence
\[
B(x,R/8)\,\subset\,\bigl\{R/8<\rho<5R/8\bigr\}\,\subset\,\tube{R}\setminus\bdry.
\]
In particular the ball does not meet $\bdry$. The function $v$ is positive and
$p$-harmonic on $B(x,R/8)$, and $\Ric_{M}\geq-(n-1)K$ holds on $M$. Theorem~\ref{thm:WZ}
therefore yields
\begin{align*}
\frac{\lvert\nabla v\rvert}{v}(x)
&\,\leq\,
\sup_{B(x,R/16)}\frac{\lvert\nabla v\rvert}{v}
\,\leq\,
C_{n,p}\,\frac{1+\sqrt{K}\,(R/8)}{R/8}
\\
&\,=\,
C_{n,p}\Bigl(\frac{8}{R}+\sqrt{K}\Bigr)
\,\leq\,
8C_{n,p}\Bigl(\frac{1}{R}+\sqrt{K}\Bigr),
\end{align*}
which is the claimed bound upon redefining the dimensional constant.
\end{proof}

\vspace{.2in}

\section{Proofs of the main results}\label{sec:proofs}

\vspace{.1in}

\begin{proof}[Proof of Theorem~\ref{thm:main}]
The closed tube $\ctube{R/2}$ is a closed and bounded subset of a complete
manifold, lying at distance at most $R/2$ from the compact set $\bdry$, hence is itself
compact. On this set $v$ is continuous and positive (of class $C^{1,\alpha}$ in the
interior and of class $C^{1}$ up to $\bdry$, by Proposition~\ref{prop:regularity}), so
$v\geq c>0$ and the energy density
\[
f\,=\,(p-1)^{2}\frac{\lvert\nabla v\rvert^{2}}{v^{2}}
\]
is continuous on $\ctube{R/2}$. It therefore attains its maximum at some point
$x_{0}$. There are three cases.

\medskip
\noindent\textit{Case 1:} $\rho(x_{0})=0$, that is, $x_{0}\in\bdry$.
Lemma~\ref{lem:boundary} gives
\[
\sqrt{f}(x_{0})
\,=\,
(p-1)\frac{\lvert\nabla v\rvert}{v}(x_{0})
\,\leq\,
\frac{2(p-1)}{R}+(n-1)\sqrt{K}.
\]

\medskip
\noindent\textit{Case 2:} $0<\rho(x_{0})<R/2$.
In this case $x_{0}$ has a neighborhood contained in $\ctube{R/2}$, so $f$
attains a local maximum at $x_{0}$. If $f(x_{0})>0$, Lemma~\ref{lem:critical} yields
\[
\sqrt{f}(x_{0})\,\leq\,(n-1)\sqrt{K}.
\]
If $f(x_{0})=0$, then $f$ vanishes identically on the tube and there is nothing to
estimate.

\medskip
\noindent\textit{Case 3:} $\rho(x_{0})=R/2$.
Here $x_{0}$ is a boundary point of the region $\ctube{R/2}$, so $f$ need not
attain a local maximum in $M$ and Lemma~\ref{lem:critical} does not apply. On the other
hand $x_{0}$ is an interior point of $M$ with $\rho(x_{0})=R/2\geq R/4$, and
Lemma~\ref{lem:interior} gives
\[
\frac{\lvert\nabla v\rvert}{v}(x_{0})
\,\leq\,
C_{n,p}\Bigl(\frac{1}{R}+\sqrt{K}\Bigr).
\]

\medskip
Combining the three cases gives
\begin{align*}
\sup_{\ctube{R/2}}\sqrt{f}
&\,\leq\,
\max\Biggl\{
\frac{2(p-1)}{R}+(n-1)\sqrt{K},\;
(n-1)\sqrt{K},\;
C_{n,p}\Bigl(\frac{1}{R}+\sqrt{K}\Bigr)
\Biggr\}
\\
&\,\leq\,
C'_{n,p}\Bigl(\frac{1}{R}+\sqrt{K}\Bigr).
\end{align*}
Dividing by $p-1$ yields~\eqref{eq:main-estimate}.
\end{proof}

\begin{proof}[Proof of Corollary~\ref{cor:liouville}]
Here $K=0$. Suppose first that $x$ lies in a connected component of $M$ that meets
$\bdry$. The function $v$ is $p$-harmonic on all of $M$, so for every $R>2\rho(x)$
Theorem~\ref{thm:main} applies on $\tube{R}$. Since $x\in\ctube{R/2}$,
\[
\frac{\lvert\nabla v\rvert}{v}(x)\,\leq\,\frac{C_{n,p}}{R}.
\]
Sending $R\to\infty$ yields $\nabla v(x)=0$. If $x$ lies in a connected component that
does not meet $\bdry$, apply Theorem~\ref{thm:WZ} with $K=0$ on geodesic balls $B(x,R)$
in that component:
\[
\sup_{B(x,R/2)}\frac{\lvert\nabla v\rvert}{v}\,\leq\,\frac{C_{n,p}}{R}.
\]
Letting $R\to\infty$ again gives $\nabla v(x)=0$. Thus $\nabla v\equiv 0$ on $M$, and
$v$ is constant on each connected component.
\end{proof}

\section*{Acknowledgements}
The author thanks his advisor Professor Jun Sun for the suggestion of the problem, guidance during this work, and comments on the introduction.


\begin{thebibliography}{11}


\bibitem{ChengYau1975}
S.~Y. Cheng and S.~T. Yau,
\textit{Differential equations on Riemannian manifolds and their geometric applications},
Comm. Pure Appl. Math. \textbf{28} (1975), 333--354.

\bibitem{Kasue1982}
A.~Kasue,
\textit{A Laplacian comparison theorem and function theoretic properties of a complete Riemannian manifold},
Japan. J. Math. (N.S.) \textbf{8} (1982), 309--341.

\bibitem{Kasue1983}
A.~Kasue,
\textit{Ricci curvature, geodesics and some geometric properties of
Riemannian manifolds with boundary},
J.\ Math.\ Soc.\ Japan \textbf{35} (1983), 117--131.

\bibitem{KotschwarNi2009}
B.~Kotschwar and L.~Ni,
\textit{Local gradient estimates of $p$-harmonic functions, $1/H$-flow, and an entropy formula},
Ann. Sci. \'Ecole Norm. Sup. (4) \textbf{42} (2009), 1--36.

\bibitem{KunikawaSakurai2022}
K.~Kunikawa and Y.~Sakurai,
\textit{Yau and Souplet--Zhang type gradient estimates on Riemannian manifolds with boundary under Dirichlet boundary condition},
Proc. Amer. Math. Soc. \textbf{150} (2022), 1767--1777.


\bibitem{Lieberman1988}
G.~M. Lieberman,
\textit{Boundary regularity for solutions of degenerate elliptic equations},
Nonlinear Anal. \textbf{12} (1988), 1203--1219.


\bibitem{Lindqvist2006}
P.~Lindqvist,
\textit{Notes on the $p$-Laplace equation},
Report~102, University of Jyv\"askyl\"a, Jyv\"askyl\"a, 2006.

\bibitem{Tolksdorf1984}
P.~Tolksdorf,
\textit{Regularity for a more general class of quasilinear elliptic equations},
J. Differential Equations \textbf{51} (1984), 126--150.


\bibitem{WangZhang2011}
X.~Wang and L.~Zhang,
\textit{Local gradient estimate for $p$-harmonic functions on Riemannian manifolds},
Comm. Anal. Geom. \textbf{19} (2011), 759--771.

\bibitem{Yau1975}
S.~T. Yau,
\textit{Harmonic functions on complete Riemannian manifolds},
Comm. Pure Appl. Math. \textbf{28} (1975), 201--228.
\end{thebibliography}
\end{document}